\title{Quantum Plane}
\newcommand \Q {\mathbb{Q}}
\newcommand \N {\mathbb{N}}
\newcommand \G {\mathbb{G}}
\newcommand \T {\mathbb{T}}
\newcommand \U {\mathcal{U}}
\newcommand \V {\mathcal{V}}
\newcommand \W {\mathcal{W}}
\newcommand \ca {\mathfrak{a}}
\newcommand \pl {\mathfrak{pl}}
\renewcommand \a {A}
\newcommand \ring {R}
\renewcommand \k {K}
\newcommand \affine {\mathbb{A}}
\newcommand \der {{\rm Der}}
\newcommand \saut {{\rm \mathbb{U}}}
\newcommand \auto {{\rm Aut}}
\newcommand \lnd {{\rm LND}}
\newcommand \image {{\rm Im}}
\newcommand \ml {{\rm ML}}
\newcommand \lc {{\rm lc}}
\newtheorem{theorem}{Theorem}
\newtheorem{proposition}[theorem]{Proposition}
\newtheorem{lemma}[theorem]{Lemma}
\newtheorem{example}[theorem]{Example}
\title{\textbf{\LARGE The isotropy group of a derivation on a Danielewski-type algebra}}
\author{ \small
Abdessamad Ahouita, Rene Baltazar, M'hammed El Kahoui and Sergey Gaifullin}
\date{\vspace{-5ex}}
\begin{document}
\Large{
\maketitle

\begin{abstract}Given an algebraically closed field $\k$ of characteristic zero, we consider in this paper $\k$-algebras of the form $$\a_{c,q}=\k[x,y,z]/\big(c(x)z-q(x,y)\big),$$ where $c(x)\in\k[x]$ is a polynomial of degree at least two and $q(x,y)\in\k[x,y]$ is a quasi-monic polynomial of degree at least two with respect to $y$. We give a complete description of the $\k$-automorphism group of $\a_{c,q}$ as an abstract group. Moreover, for every non-locally nilpotent $\k$-derivation $\delta$ of $\a_{c,q}$ we prove that the isotropy group of $\delta$ is a linear algebraic group of dimension at most three.

\end{abstract}

\section{Introduction}\label{sec:intro}
Throughout this paper $\k$ denotes an algebraically closed field of characteristic zero. For a $\k$-algebra $\a$ and a positive integer $n$ we write $\a=\k^{[n]}$ to mean that $\a$ is isomorphic to the $\k$-algebra of polynomials in $n$ variables.

\medskip Let $\a$ be an affine $\k$-algebra, and let $\auto_{\k}(\a)$ and $\der_{\k}(\a)$ be respectively the $\k$-automorphism group and the $\a$-module of $\k$-derivations of $\a$. The group $\auto_{\k}(\a)$ naturally acts on $\der_{\k}(\a)$ by conjugation, and a fundamental question is to classify the $\k$-derivations of $\a$ up to conjugation. A first step towards this task is to describe the {\it isotropy group} of every $\k$-derivation $\delta$ of $\a$, i.e., the subgroup of $\auto_{\k}(\a)$, denoted by $\auto_{\k}(\a,\delta)$, consisting of automorphisms that commute with $\delta$.

\medskip Recall that a $\k$-derivation $\delta$ of $\a$ is said to be {\it locally nilpotent} if for every $a\in\a$ we have $\delta^{k}(a)=0$ for some $k\geq 1$. A classical result due to Rentschler \cite{Rentschler_68} completely classifies, up to conjugation, locally nilpotent derivations of the polynomial algebra $\a=\k^{[2]}$, and allows to easily describe the isotropy group of every such a derivation. In the case the ground field is replaced by a principal ideal domain $\ring$ containing $\Q$, a description of the group $\auto_{\ring}(\a,\delta)$ was recently obtained in \cite{elkahoui25} for every {\it irreducible} locally nilpotent $\ring$-derivation of $\a=\ring^{[2]}$. In dimension three, the isotropy group of a locally nilpotent $\k$-derivation $\delta$ of $\a=\k^{[3]}$ was first described in \cite{finstonWalcher97} under the assumption that $\delta$ is {\it triangularizable}, and then in \cite{stampfli_2014} for the general case. When $\a$ is the coordinate ring of a {\it Danielewski surface} \cite{danielewski_89} the isotropy group of a locally nilpotent $\k$-derivation of $\a$ was described in \cite{baltazarVeloso2020,dasguptaLahiri_2023}.

\medskip In \cite{baltazarPan_2021} the authors studied the isotropy group of an arbitrary $\k$-derivation $\delta$ of $\a=\k^{[2]}$ and obtained, among other things, criteria for $\auto_{\k}(\a,\delta)$ to be an algebraic group. Based on this work it is proved in \cite{pan_2022} that $\auto_{\k}(\a,\delta)$ is an algebraic group if and only if $\delta$ is not locally nilpotent. It is also proved that such a result no longer holds in higher dimensions. Therefore, a natural question is to ask whether the main result in \cite{pan_2022} holds for other types of two-dimensional affine $\k$-algebras. In this paper we address this question for the coordinate rings of a class of Danielewski-type surfaces embedded in the affine $3$-space $\affine_{\k}^3$. More precisely, we consider $\k$-algebras of the form $$\a_{c,q}=\k[x,y,z]/\big(c(x)z-q(x,y)\big),$$ where $c(x)\in\k[x]$ has degree $n\geq 2$ and $q(x,y)\in\k[x,y]$ is {\it quasi-monic} of degree $d\geq 2$ with respect to $y$, i.e., its leading coefficient with respect to $y$ is a constant in $\k^{\star}$. This class of affine algebras includes the ones defined and studied in 
\cite{makar-limanov_2001,bianchi_vesolo_2017}. We first give in Theorem \ref{splittingTheoremCentered} a complete description of $\auto_{\k}(\a_{c,q})$ as an abstract group. We then obtain in Theorem \ref{isotropyGroupTheorem} that for every non-locally nilpotent $\k$-derivation $\delta$ of $\a_{c,q}$ the group $\auto_{\k}(\a_{c,q},\delta)$ is algebraic of dimension at most three. We also give a typical example where $\auto_{\k}(\a_{c,q},\delta)$ is three-dimensional.

\section{Preliminaries}\label{sec:prelim}In this section we recall some basic facts on locally nilpotent derivations theory, and we refer to the books \cite{miyanishi_book,essen_book,freudenburg_book} for more details. Throughout this paper, $\k$ stands for an algebraically closed field of characteristic zero.

\medskip Let $\ring$ be a ring containing $\Q$ and $A$ be an $\ring$-algebra. Given an $\ring$-derivation $\xi$ of $\a$, the kernel of $\xi$ is an $\ring$-subalgebra of $\a$. The $\ring$-derivation $\xi$ is said to be locally nilpotent if every $a\in A$ satisfies $\xi^k(a)=0$ for some $k\geq 1$. It is said to be {\it irreducible} if its image $\xi(A)$ is not contained in any proper principal ideal of $\a$. We denote by $\lnd_{\ring}(A)$ the set of locally nilpotent $\ring$-derivations of $A$. 

A locally nilpotent $\ring$-derivation $\xi$ of $A$ induces an $\ring$-algebra homomorphism $\exp(t\xi):A\longrightarrow A[t]=A^{[1]}$ defined for every $a\in A$ by $$\exp(t\xi)(a)=\sum_{k\geq 0}\frac{1}{k!}\xi^k(a)t^k.$$ If we let $B=\ker(\xi)$, then the extension of $\exp(t\xi)$ to $A[t]$, as an $\ring$-algebra homomorphism, by setting $\exp(t\xi)(t)=t$ yields a $B[t]$-automorphism of $A[t]$ whose inverse is $\exp(-t\xi)$. In particular, for every $a\in B$ the map $\exp(a\xi)$ is a $B$-automorphism of $\a$, called an {\it exponential automorphism} of $\a$. The following result \cite[Proposition 2.1.3]{essen_book} characterizes the exponential automorphisms of $\a$.

\begin{proposition}\label{exponentialAutomorphisms}Let $\a$ be a ring containing $\Q$ and $\sigma$ be an endomorphism of $\a$. Then $\sigma=\exp(\xi)$ for some locally nilpotent derivation $\xi$ of $\a$ if and only if the additive map $\eta=\sigma-{\rm id}_{\a}$ is locally nilpotent. In this case we have
	$$\xi=\sum_{k\geq 1}(-1)^{k+1}\frac{\eta^k}{k}.$$
\end{proposition}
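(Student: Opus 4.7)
The plan is to split the equivalence into two directions. The forward direction follows from a short filtration argument: if $\sigma=\exp(\xi)$ with $\xi$ locally nilpotent, then $\eta=\sigma-{\rm id}_{\a}=\sum_{k\geq 1}\xi^k/k!$, and since every summand contains a factor of $\xi$, one checks that $\eta(\ker\xi^j)\subseteq\ker\xi^{j-1}$ for all $j\geq 1$. Given $a\in\a$ with $\xi^N(a)=0$, iterating this inclusion yields $\eta^N(a)=0$, so $\eta$ is locally nilpotent.

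For the reverse direction, assume $\eta=\sigma-{\rm id}_{\a}$ is locally nilpotent and set $\xi=\sum_{k\geq 1}(-1)^{k+1}\eta^k/k$. The sum is finite on each element of $\a$, so $\xi$ is a well-defined $\Q$-linear endomorphism, and the same filtration argument as above shows that it is locally nilpotent. To check $\exp(\xi)=\sigma$, fix $a\in\a$ and $N$ with $\eta^N(a)=0$; the operator $\eta$ is nilpotent on the $\Q$-span of $\{a,\eta(a),\ldots,\eta^{N-1}(a)\}$, so the formal identity $\exp(\log(1+T))=1+T$ in $\Q[T]/(T^N)$ may be evaluated with $T\mapsto\eta$ to give $\exp(\xi)(a)=(1+\eta)(a)=\sigma(a)$.

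The main obstacle is establishing that $\xi$ obeys the Leibniz rule. My plan is the following iteration trick. First, I would show by induction on $n\geq 1$ that $\sigma^n=\exp(n\xi)$; the inductive step $\exp(\xi)\circ\exp(n\xi)=\exp((n+1)\xi)$ reduces, after applying both sides to a fixed element, to the binomial identity $\sum_{i+j=k}\binom{k}{i}n^j=(n+1)^k$, and is valid because the relevant operators commute and act nilpotently on the submodule generated by that element. Since each $\sigma^n$ is a ring endomorphism, the relation $\sigma^n(ab)=\sigma^n(a)\sigma^n(b)$ becomes, for fixed $a,b\in\a$, the identity
$$\sum_{k\geq 0}\frac{\xi^k(ab)}{k!}\,n^k=\sum_{i,j\geq 0}\frac{\xi^i(a)\xi^j(b)}{i!\,j!}\,n^{i+j}$$
of polynomials in the integer variable $n$ with coefficients in $\a$ (only finitely many terms are nonzero by local nilpotence of $\xi$). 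Equating the coefficients of $n^1$ on both sides yields $\xi(ab)=\xi(a)b+a\xi(b)$, so $\xi$ is a derivation, and the explicit formula for $\xi$ is built into its definition.
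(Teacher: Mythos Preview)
The paper does not prove this proposition; it quotes it verbatim as \cite[Proposition~2.1.3]{essen_book} and uses it as a black box. So there is no in-paper argument to compare against, and what matters is whether your proof stands on its own.

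Your argument is correct. The forward implication via the filtration $\ker\xi\subseteq\ker\xi^2\subseteq\cdots$ is standard, and your handling of the converse is clean: the formal identity $\exp(\log(1+T))=1+T$ in $\Q[T]/(T^N)$ can indeed be specialized to the nilpotent endomorphism $\eta$ on the finite-dimensional $\Q$-span of the $\eta^i(a)$, giving $\exp(\xi)=\sigma$. The trick of upgrading $\sigma=\exp(\xi)$ to $\sigma^n=\exp(n\xi)$ and then reading off the Leibniz rule from the coefficient of $n$ in the resulting polynomial identity is a nice, elementary device. One small point you leave implicit: to conclude that two polynomials in $n$ with coefficients in $A$ agree coefficientwise from their agreement at all nonnegative integers, you are using that the Vandermonde matrix $(j^k)_{0\le j,k\le N}$ is invertible over $\Q\subseteq A$; this is routine but worth a word, since $A$ is not assumed to be a domain.

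For comparison, the proof in van~den~Essen's book establishes the derivation property of $\xi=\log\sigma$ more directly from the multiplicativity of $\sigma$ via a formal power series manipulation, without passing through the family $\sigma^n$. Your route trades that computation for a Vandermonde step; both are short, and yours has the minor advantage that the only power-series identity needed is the univariate $\exp\circ\log=\mathrm{id}$.
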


The subset of $B=\ker(\xi)$ defined by $$\pl(\xi)=\big\{\xi(s)\mid \xi^2(s)=0\big\}$$ is an ideal of $B$ called the {\it plinth ideal} of $\xi$. For every nonzero $a\in A$ we define the {\it degree} of $a$ with respect to $\xi$, denoted by $\deg_{\xi}(a)$, to be the degree of the polynomial $\exp(t\xi)(a)$ with respect to $t$. When $A$ is an integral domain $\deg_{\xi}$ is a degree function, i.e., for every nonzero $a,b\in A$ we have $$\deg_{\xi}(ab)=\deg_{\xi}(a)+\deg_{\xi}(b).$$ In particular, we obtain that $B$ is {\it factorially closed} in $A$. That is for every $a,b\in A$ such that $ab\in B\setminus\{0\}$ we have $a,b\in B$.

\medskip Throughout this paper we will consider $\k$-algebras of the form $$\a_{c,q}=\k[x,y,z]/\big(c(x)z-q(x,y)\big)=\k[\bar{x},\bar{y},\bar{z}],$$ where $c(x)\in\k[x]$ has degree $n\geq 2$ and $q(x,y)\in\k[x,y]$ is quasi-monic of degree $d\geq 2$ with respect to $y$. Such an algebra is the coordinate ring of the Danielewski surface, embedded in the affine space $\affine_{\k}^{3}$, defined by the equation $c(x)z-q(x,y)=0$ and will be called a {\it Danielewski algebra}. The $\k$-algebra $\a_{c,q}$ is an integral domain and can naturally be endowed with a locally nilpotent $\k$-derivation denoted $\xi_{c,q}$ and defined by $$\xi(\bar{x})=0,\quad \xi_{c,q}(\bar{y})=c(\bar{x}),\quad \xi_{c,q}(\bar{z})=\partial_yq(\bar{x},\bar{y}).$$ The derivation $\xi_{c,q}$ will be called the {\it standard derivation} of $\a_{c,q}$. The following result \cite[Theorem 5]{bianchi_vesolo_2017} gathers some fundamental properties of Danielewski algebras.

\begin{theorem}\label{basisLemma}Let $c(x)\in\k[x]$ be a non-constant polynomial and $q(x,y)\in\k[x,y]$ be a quasi-monic polynomial of degree $d\geq 2$ with respect to $y$. Then $\ker(\xi_{c,q})=\k[\bar{x}]$ and $\lnd_{\k[\bar{x}]}(\a_{c,q})=\k[\bar{x}]\xi_{c,q}$. In particular, $\xi_{c,q}$ is irreducible.
\end{theorem}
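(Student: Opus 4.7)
The plan is to embed $\a_{c,q}$ into $S[\bar{y}]$, where $S := \k[\bar{x}][c(\bar{x})^{-1}]$: since $\a_{c,q}$ is a domain and $c(\bar{x})$ is a non-zero-divisor, the defining relation gives an identification $\a_{c,q} \otimes_{\k[\bar{x}]} S \cong S[\bar{y}] = S^{[1]}$ with $\bar{z} \mapsto q(\bar{x},\bar{y})/c(\bar{x})$. Under this embedding $\xi_{c,q}$ becomes the $S$-derivation $c(\bar{x})\partial_{\bar{y}}$, which is manifestly locally nilpotent with kernel $S$; local nilpotence then restricts to $\a_{c,q}$.

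The main technical tool I would set up is a free $\k[\bar{x}]$-basis of $\a_{c,q}$. Because $c(\bar{x})\bar{z} - q(\bar{x},\bar{y})$ is quasi-monic of degree $d$ in $\bar{y}$, division with remainder in $\k[\bar{x},\bar{z}][y]$ yields the basis $\mathcal{B} := \{\bar{y}^i \bar{z}^j : 0 \leq i < d,\ j \geq 0\}$. In $S[\bar{y}]$, the basis element $\bar{y}^i \bar{z}^j$ has $\bar{y}$-degree exactly $i + jd$, with leading coefficient $\alpha^j/c(\bar{x})^j$, where $\alpha \in \k^{\star}$ is the leading $y$-coefficient of $q$; moreover, the map $(i,j) \mapsto i + jd$ is injective on $\{0,\dots,d-1\} \times \Z_{\geq 0}$, so distinct basis elements have distinct $\bar{y}$-degrees. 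Reading off leading $\bar{y}$-terms, one obtains $\a_{c,q} \cap S = \k[\bar{x}]$. Combined with the first paragraph, this gives $\ker(\xi_{c,q}) = \a_{c,q} \cap \ker(c(\bar{x})\partial_{\bar{y}}) = \a_{c,q} \cap S = \k[\bar{x}]$.

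For the classification of $\k[\bar{x}]$-linear lnds: any such $\delta$ extends to a locally nilpotent $S$-derivation of $S[\bar{y}] = S^{[1]}$, hence equals $h(\bar{x})\partial_{\bar{y}}$ with $h(\bar{x}) \in S$; since $h = \delta(\bar{y}) \in \a_{c,q}$, the previous paragraph forces $h \in \k[\bar{x}]$. The compatibility $\delta(\bar{z}) = h\,\partial_y q/c \in \a_{c,q}$ then forces $c \mid h$ in $\k[\bar{x}]$, by extracting the leading $\bar{y}^{d-1}$ coefficient $h\cdot d\alpha / c$ and observing that the only $\mathcal{B}$-element of $\bar{y}$-degree $d-1$ is $\bar{y}^{d-1}$ itself, whose $\mathcal{B}$-coefficient must lie in $\k[\bar{x}]$. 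Hence $\delta = (h/c)\xi_{c,q} \in \k[\bar{x}]\xi_{c,q}$. Irreducibility of $\xi_{c,q}$ then follows: if $\xi_{c,q}(\a_{c,q}) \subset (g)$, then $g \mid c(\bar{x})$ in $\a_{c,q}$, so $g \in \k[\bar{x}]$ by factorial closedness of $\ker(\xi_{c,q}) = \k[\bar{x}]$; and $g \mid \partial_y q$ read in the basis $\mathcal{B}$ forces $g \mid d\alpha$, so $g \in \k^{\star}$. The main obstacle is the divisibility step $c \mid h$; everything else reduces to bookkeeping with the unique $\bar{y}$-degree decomposition in $\mathcal{B}$.
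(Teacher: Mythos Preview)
The paper does not prove this theorem; it quotes it as \cite[Theorem~5]{bianchi_vesolo_2017}. So there is no in-paper argument to compare against, and your task was effectively to supply a proof from scratch.

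Your argument is correct. The localization $\a_{c,q}[c(\bar x)^{-1}]\cong S[\bar y]$ is the right setting: it makes $\xi_{c,q}$ a scalar multiple of $\partial_{\bar y}$, and the standard fact $\lnd_S(S^{[1]})=S\,\partial_{\bar y}$ gives the shape of every $\k[\bar x]$-linear LND after localization. The free $\k[\bar x]$-basis $\mathcal B$ you set up (via division by the $y$-quasi-monic polynomial $c(x)z-q(x,y)$ in $\k[x,z][y]$) is exactly Proposition~\ref{xbasisProposition} of the paper, and you prove it independently, so there is no circularity. Reading off leading $\bar y$-coefficients in $S[\bar y]$ against the $\mathcal B$-expansion is the right mechanism both for $\a_{c,q}\cap S=\k[\bar x]$ and for the divisibility $c\mid h$; the latter works because the only $\mathcal B$-element of $\bar y$-degree $d-1$ is $\bar y^{d-1}$, forcing its $\k[\bar x]$-coefficient to equal $hd\alpha/c$. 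The irreducibility step is fine as written: factorial closedness of $\k[\bar x]$ puts any common divisor $g$ into $\k[\bar x]$, and freeness of $\mathcal B$ then forces $g\mid d\alpha$.

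One cosmetic remark: when you write ``hence equals $h(\bar x)\partial_{\bar y}$ with $h(\bar x)\in S$'' the notation already suggests $h\in\k[\bar x]$; it would be cleaner to write $h\in S$ first and then deduce $h\in\k[\bar x]$ from $h=\delta(\bar y)\in\a_{c,q}\cap S$, which is what you in fact do in the next clause.
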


Let $\a$ be an integral domain containing $\Q$ endowed with a nonzero locally nilpotent derivation $\xi$ and let $B=\ker(\xi)$. Following the terminology introduced in \cite{freudenburg_2019}, we will say that $(\a,\xi)$ has the
{\it freeness property} if $\a$ is a free $B$-module and has a basis $(s_k)_{k\in\N}$ such that $\deg_{\xi}(s_k)=k$ for every $k\in\N$. Such a basis is called a $\xi$-basis of $\a$ over $B$. The following is a particular case of \cite[Theorem 3.3]{elkahoui_2023}.

\begin{proposition}\label{xbasisProposition}Let $c\in\k[x]$ be a non-constant polynomial and $q(x,y)\in\k[x,y]$ be a quasi-monic polynomial of degree $d\geq 2$ with respect to $y$. Then 
	$$\big\{ \bar{y}^i\bar{z}^j\mid 0\leq i<d, \;j\in\N\}$$ is a $\xi_{c,q}$-basis of $\a_{c,q}$ over $\k[\bar{x}]=\ker(\xi_{c,q})$.
\end{proposition}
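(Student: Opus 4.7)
The plan is to verify three things: the monomials $\bar{y}^i\bar{z}^j$ with $0\le i<d$ and $j\in\N$ span $\a_{c,q}$ over $\k[\bar{x}]$, are linearly independent over $\k[\bar{x}]$, and carry $\xi_{c,q}$-degrees that, after re-indexing, realize the condition $\deg_{\xi_{c,q}}(s_k)=k$ for all $k\in\N$.

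First I would compute the relevant $\xi_{c,q}$-degrees. Clearly $\deg_{\xi_{c,q}}(\bar{y})=1$ since $\xi_{c,q}(\bar{y})=c(\bar{x})\ne 0$ lies in $\ker(\xi_{c,q})$. To compute $\deg_{\xi_{c,q}}(\bar{z})$, write $q(x,y)=\alpha y^d+\sum_{i<d}q_i(x)y^i$ with $\alpha\in\k^{\star}$ and use the relation $c(\bar{x})\bar{z}=q(\bar{x},\bar{y})$. The right-hand side has $\xi_{c,q}$-degree $d$: the leading term contributes $d\cdot\deg_{\xi_{c,q}}(\bar{y})=d$, the remaining terms have degree $<d$, and since $\a_{c,q}$ is a domain $\deg_{\xi_{c,q}}$ is a degree function, so these top terms cannot cancel. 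The left-hand side has degree $\deg_{\xi_{c,q}}(\bar{z})$ because $c(\bar{x})\in\ker(\xi_{c,q})$, whence $\deg_{\xi_{c,q}}(\bar{z})=d$. Multiplicativity then yields $\deg_{\xi_{c,q}}(\bar{y}^i\bar{z}^j)=i+jd$, and the map $(i,j)\mapsto i+jd$ is a bijection $\{0,\dots,d-1\}\times\N\to\N$.

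For spanning, I would argue by induction on $k$ that for every $j\ge 0$ the element $\bar{y}^k\bar{z}^j$ lies in the $\k[\bar{x}]$-module $M$ generated by $\{\bar{y}^i\bar{z}^{j'}:0\le i<d,\;j'\ge 0\}$. The case $k<d$ is trivial. For $k\ge d$, solving the defining relation gives $\bar{y}^d=\alpha^{-1}\bigl(c(\bar{x})\bar{z}-\sum_{i<d}q_i(\bar{x})\bar{y}^i\bigr)$, and therefore
\[
\bar{y}^k\bar{z}^j=\alpha^{-1}c(\bar{x})\,\bar{y}^{k-d}\bar{z}^{j+1}-\alpha^{-1}\sum_{i<d}q_i(\bar{x})\,\bar{y}^{k-d+i}\bar{z}^j.
\]
Both summands involve $\bar{y}$-powers strictly below $k$, so the induction hypothesis (applied uniformly over all $\bar{z}$-exponents) places them in $M$.

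For linear independence, a nontrivial finite relation $\sum p_{i,j}(\bar{x})\bar{y}^i\bar{z}^j=0$ with $0\le i<d$ and $p_{i,j}\in\k[\bar{x}]\setminus\{0\}$ would have summands of pairwise distinct $\xi_{c,q}$-degrees $i+jd$, since each $p_{i,j}(\bar{x})$ has $\xi_{c,q}$-degree $0$. The degree-function property would then force the sum's $\xi_{c,q}$-degree to equal the maximum of these, contradicting its vanishing. Re-indexing the monomials by their $\xi_{c,q}$-degree produces the required sequence $(s_k)_{k\in\N}$ with $\deg_{\xi_{c,q}}(s_k)=k$. I expect the only genuinely delicate point to be formulating the spanning induction cleanly: the rewrite step raises the $\bar{z}$-exponent, so one must induct on the $\bar{y}$-exponent alone while letting the $\bar{z}$-exponent roam freely; the computation $\deg_{\xi_{c,q}}(\bar{z})=d$ is the other crux, after which the linear-independence argument is essentially automatic.
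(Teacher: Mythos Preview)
Your argument is correct. Note that the paper does not actually prove this proposition: it is simply recorded as a particular case of an external result (Theorem~3.3 of the cited paper on the freeness property for locally nilpotent derivations of $R^{[2]}$), with no argument supplied. Your self-contained route---computing $\deg_{\xi_{c,q}}(\bar z)=d$ from the defining relation $c(\bar x)\bar z=q(\bar x,\bar y)$, reducing high powers of $\bar y$ via $\bar y^{d}=\alpha^{-1}\bigl(c(\bar x)\bar z-\sum_{i<d}q_i(\bar x)\bar y^{i}\bigr)$ by induction on the $\bar y$-exponent, and exploiting the pairwise distinct $\xi_{c,q}$-degrees $i+jd$ for linear independence---is precisely the elementary verification one would want in place of that citation, and it avoids invoking the more general machinery of the referenced paper. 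One purely cosmetic remark: in the linear-independence step, phrase the hypothesis as ``at least one $p_{i,j}$ is nonzero'' rather than requiring all coefficients to be nonzero; the argument (pick the term of maximal $\xi_{c,q}$-degree among those with nonzero coefficient) goes through unchanged.
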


Let $\ring$ be a ring and $p(y)\in\ring[y]$ be a quasi-monic polynomial of degree $d\geq 1$. We will say that $p(y)$ is {\it centered} if its coefficient of degree $d-1$ is zero. Now assume that $\ring=\k[x]$ and let $c(x)\in\k[x]$ be non-constant and $q(x,y)\in\k[x,y]$ be quasi-monic of degree $d\geq 2$ with respect to $y$. Borrowing the terminology of \cite[Definition 4]{poloni_2011}, the Danielewski surface defined by $c(x)z-q(x,y)=0$ is said to be in {\it reduced form} if $c(x)$ is centered, $q(x,y)$ is centered as a polynomial in $\k[x][y]$ and $\deg_x(q)<\deg(c)$. In this case, we will also say that the Danielewski algebra $\a_{c,q}$ is in reduced form. The following lemma \cite[Lemma 3.1]{elkahouiHammi_2025} is an adaptation of \cite[Lemma 2]{poloni_2011} to our situation.


\begin{proposition}\label{standardForm}Let $c(x)\in\k[x]$ be non-constant and $q(x,y)\in\k[x,y]$ be quasi-monic of degree $d\geq 2$ with respect to $y$. Then there exists a triangular $\k$-automorphism $\sigma$ of $\k[x,y,z]$ such that $$\sigma\big (c(x)z-q(x,y)\big)=\tilde{c}(x)z-\tilde{q}(x,y),$$ where 
	\begin{enumerate}[a.]
		\item $\tilde{c}(x)$ is centered and $\deg(\tilde{c})=\deg(c)$,
		\item $\tilde{q}(x,y)$ is quasi-monic, centered of degree $d$ with respect to $y$ and $\deg_x(\tilde{q})<\deg(\tilde{c})$.
	\end{enumerate}
	Therefore, the $\k$-algebra $\a_{\tilde{c},\tilde{q}}$ is in reduced form and $\sigma$ induces a differential $\k$-algebra isomorphism from $(\a_{c,q},\xi_{c,q})$ onto $(\a_{\tilde{c},\tilde{q}},\xi_{\tilde{c},\tilde{q}})$.
\end{proposition}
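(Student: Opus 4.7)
The plan is to build $\sigma$ as a composition $\sigma = \sigma_3 \circ \sigma_2 \circ \sigma_1$ of three explicit triangular automorphisms of $\k[x,y,z]$, each handling one of the three target properties, and arranged so that later steps do not undo what earlier ones achieved. First I would center $c$: writing $c(x) = c_n x^n + c_{n-1}x^{n-1} + \cdots$ with $c_n \in \k^{\star}$, take $\sigma_1(x) = x - c_{n-1}/(nc_n)$ and leave $y, z$ fixed. The $x^{n-1}$ coefficient of $c(\sigma_1(x))$ then vanishes and $c$ becomes centered of the same degree, while the transformed $q$ remains quasi-monic of degree $d$ in $y$ because its $y$-leading coefficient was a nonzero scalar, independent of $x$.

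Writing the new $q$ as $\alpha y^d + \beta(x)y^{d-1} + O(y^{d-2})$ with $\alpha \in \k^{\star}$, I would then apply $\sigma_2(y) = y - \beta(x)/(d\alpha)$, fixing $x$ and $z$. An elementary expansion kills the $y^{d-1}$ coefficient; since $\sigma_2$ fixes $x$, the centeredness of $c$ obtained in Step 1 is undisturbed.

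The delicate step is the third. Let $c(x)$ and $q(x,y)$ now denote the polynomials obtained after Steps 1 and 2. I would set $\sigma_3(z) = z + h(x,y)$ (fixing $x$ and $y$), with $h$ chosen by performing the Euclidean division of $q$ by $c$ coefficient-by-coefficient in $y$: for $q = \sum_{i=0}^{d} a_i(x) y^i$ and $a_i(x) = c(x) h_i(x) + r_i(x)$ with $\deg_x r_i < \deg c$, set $h(x,y) = \sum_i h_i(x) y^i$. Then $\sigma_3(c(x)z - q(x,y)) = c(x) z - \tilde{q}(x,y)$ with $\tilde{q}(x,y) = \sum_i r_i(x) y^i$, so $\deg_x \tilde{q} < \deg c$. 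The crux is that $a_d \in \k^{\star}$ forces $h_d = 0$ and $r_d = a_d$, while $a_{d-1} = 0$ forces $h_{d-1} = r_{d-1} = 0$; hence quasi-monicity and $y$-centeredness persist. The composition $\sigma$ is visibly triangular by construction.

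For the differential isomorphism claim, each $\sigma_i$ sends the defining ideal of one Danielewski algebra onto that of the next, so $\sigma$ descends to a $\k$-algebra isomorphism $\bar{\sigma} : \a_{c,q} \to \a_{\tilde{c},\tilde{q}}$. To check $\bar{\sigma} \circ \xi_{c,q} = \xi_{\tilde{c},\tilde{q}} \circ \bar{\sigma}$ it suffices to test on the generators $\bar{x}, \bar{y}, \bar{z}$; the first two cases are immediate from the explicit form of $\sigma$, and for $\bar{z}$ a short chain-rule computation reduces the required identity to $\partial_y \tilde{q} + \tilde{c}\,\partial_y h = \partial_y q(\sigma(x), \sigma(y))$, which is exactly the $y$-derivative of the defining relation $\tilde{q}(x,y) = q(\sigma(x),\sigma(y)) - \tilde{c}(x) h(x,y)$. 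The main obstacle in the whole argument is the interaction between Steps 2 and 3: a naive choice of $h$ could reintroduce a nonzero $y^{d-1}$ coefficient or destroy quasi-monicity, and it is precisely the coefficient-by-coefficient division in $y$, together with the vanishing $a_{d-1} = 0$ and the fact that $\deg_x a_d = 0 < \deg c$, that prevents this from happening.
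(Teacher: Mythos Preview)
The paper does not actually supply a proof of this proposition: it is quoted verbatim as \cite[Lemma 3.1]{elkahouiHammi_2025}, itself described as an adaptation of \cite[Lemma 2]{poloni_2011}. So there is no in-paper argument to compare against.

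Your three-step construction is correct and is, in fact, the standard reduction one finds in the cited sources: a translation in $x$ to center $c$, a translation $y\mapsto y-\beta(x)/(d\alpha)$ to center $q$ in $y$, and a shift $z\mapsto z+h(x,y)$ coming from Euclidean division by $c$ to force $\deg_x\tilde q<\deg c$. The point you single out as delicate---that the coefficient-wise division leaves $r_d=a_d\in\k^\star$ and $r_{d-1}=0$ because $a_d$ is constant and $a_{d-1}=0$---is exactly the right observation, and your verification of the intertwining relation on $\bar z$ via the $y$-derivative of $\tilde q=q(\sigma(x),\sigma(y))-\tilde c\,h$ is clean. There is nothing missing.
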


\section[Automorphism group of a Danielewski algebra]{Structure of the automorphism group of a Danielewski algebra}\label{sec:automorphismsDanielewski}Recall that the {\it Makar-Limanov invariant} of a $\k$-algebra $\a$, denoted by $\ml(\a)$, is the intersection of the kernels of all locally nilpotent $\k$-derivations of $\a$. When $\deg(c)\geq 2$ the Makar-Limanov invariant of $\a_{c,q}$ is $\ml(\a_{c,q})=\k[\bar{x}]$. This property is proved in \cite{makar-limanov_2001,poloni_2011} for the case $c(x)=x^n$ and then for the general case in \cite{bianchi_vesolo_2017}.

\begin{lemma}\label{decompositionLemma}Let $c(x)\in\k[x]$ be a polynomial of degree $n\geq 2$ and $q(x,y)\in \k[x,y]$ be a quasi-monic polynomial of degree $d\geq 2$ with respect to $y$. Then we have the following properties.
	\begin{enumerate}
		\item Every $\sigma\in\auto_{\k}(\a_{c,q})$ satisfies $$\sigma\big(\k[\bar{x}]\big)=\k[\bar{x}],\quad \sigma\big(\pl(\xi_{c,q})\big)=\pl(\xi_{c,q}).$$
		\item For every $\sigma\in\auto_{\k}(\a_{c,q})$ there exists a unique $(u_{\sigma},h_{\sigma})\in\k^{\star}\times\k[x]$ such that $\sigma(\bar{y})=u_{\sigma}\bar{y}+h_{\sigma}(\bar{x})$.
	\end{enumerate}
\end{lemma}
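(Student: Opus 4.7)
The strategy is to combine the Makar--Limanov invariant with a degree argument in the $\xi_{c,q}$-basis from Proposition~\ref{xbasisProposition}. First, since $\ml(\a_{c,q})=\k[\bar{x}]$ is preserved by every $\k$-automorphism, $\sigma(\k[\bar{x}])=\k[\bar{x}]$ is automatic, and $\sigma$ restricts to a $\k$-automorphism of $\k[\bar{x}]=\k^{[1]}$; hence $\sigma(\bar{x})=\alpha\bar{x}+\beta$ for some $\alpha\in\k^{\star}$ and $\beta\in\k$.

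Next, the conjugate $\sigma\,\xi_{c,q}\,\sigma^{-1}$ is a nonzero locally nilpotent $\k$-derivation of $\a_{c,q}$ whose kernel contains $\sigma(\k[\bar{x}])=\k[\bar{x}]$, so Theorem~\ref{basisLemma} yields $\sigma\,\xi_{c,q}\,\sigma^{-1}=a(\bar{x})\,\xi_{c,q}$ for some nonzero $a\in\k[\bar{x}]$. Since $a\in\ker\xi_{c,q}$, induction on $k$ gives $\sigma\,\xi_{c,q}^{k}=a^{k}\,\xi_{c,q}^{k}\,\sigma$, and as $\a_{c,q}$ is a domain one concludes $\deg_{\xi_{c,q}}(\sigma(t))=\deg_{\xi_{c,q}}(t)$ for every $t\in\a_{c,q}$. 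Applied to $\bar{y}$ this forces $\deg_{\xi_{c,q}}(\sigma(\bar{y}))=1$; expanding in the $\xi_{c,q}$-basis $\{\bar{y}^{i}\bar{z}^{j}:0\leq i<d,\ j\in\N\}$ and noting that $\deg_{\xi_{c,q}}(\bar{y}^{i}\bar{z}^{j})=i+jd$ with $d\geq 2$, the only basis elements of $\xi_{c,q}$-degree at most one are $1$ and $\bar{y}$, so
$$\sigma(\bar{y})=g_{0,0}(\bar{x})+g_{1,0}(\bar{x})\,\bar{y},\qquad g_{1,0}\neq 0.$$

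To upgrade $a$ and $g_{1,0}$ from polynomials to scalars, I would apply $\sigma$ to the relation $\xi_{c,q}(\bar{y})=c(\bar{x})$ via the intertwining identity $\sigma\,\xi_{c,q}=a\,\xi_{c,q}\,\sigma$:
$$c(\alpha\bar{x}+\beta)=\sigma(c(\bar{x}))=a(\bar{x})\,\xi_{c,q}(\sigma(\bar{y}))=a(\bar{x})\,g_{1,0}(\bar{x})\,c(\bar{x}).$$
The left-hand side has degree $n$ in $\bar{x}$ while the right has degree $n+\deg(a)+\deg(g_{1,0})$, forcing $a$ and $g_{1,0}$ to be nonzero constants. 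Setting $u_{\sigma}=g_{1,0}\in\k^{\star}$ and $h_{\sigma}=g_{0,0}\in\k[x]$ yields the decomposition in (2); uniqueness is immediate since $\bar{y}\notin\k[\bar{x}]$. Finally, because $a\in\k^{\star}$ is a unit, one has
$$\sigma\bigl(\pl(\xi_{c,q})\bigr)=\pl\bigl(\sigma\,\xi_{c,q}\,\sigma^{-1}\bigr)=\pl(a\,\xi_{c,q})=a\,\pl(\xi_{c,q})=\pl(\xi_{c,q}),$$
completing (1). The one genuine obstacle is forcing the intertwining scalar $a$ to be a constant rather than an arbitrary element of $\k[\bar{x}]$; the plan handles this by first constraining the $\xi_{c,q}$-basis shape of $\sigma(\bar{y})$ and then reading off $\deg(a)=0$ from the last display.
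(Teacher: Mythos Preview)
Your proof is correct, but it follows a different path from the paper's. The paper's key shortcut is to use the \emph{irreducibility} of $\xi_{c,q}$ (stated in Theorem~\ref{basisLemma}): since conjugation by $\sigma$ preserves irreducibility and every locally nilpotent $\k$-derivation of $\a_{c,q}$ lies in $\k[\bar{x}]\,\xi_{c,q}$, one gets $\sigma\xi_{c,q}\sigma^{-1}=e\,\xi_{c,q}$ with $e\in\k^{\star}$ \emph{immediately}. From this, invariance of the kernel and of the plinth ideal follow at once, and then the paper integrates the identity $\xi_{c,q}\big(\sigma(\bar{y})-u_{\sigma}\bar{y}\big)=0$ to obtain the form of $\sigma(\bar{y})$, never invoking the $\xi_{c,q}$-basis of Proposition~\ref{xbasisProposition}.

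Your route instead starts only with $\sigma\xi_{c,q}\sigma^{-1}=a(\bar{x})\,\xi_{c,q}$ for some nonzero polynomial $a$, then (i) uses the intertwining relation to show $\deg_{\xi_{c,q}}$ is $\sigma$-invariant, (ii) reads off the shape of $\sigma(\bar{y})$ from the $\xi_{c,q}$-basis, and (iii) forces $a$ and $g_{1,0}$ to be scalars by comparing $x$-degrees in $c(\alpha\bar{x}+\beta)=a(\bar{x})g_{1,0}(\bar{x})c(\bar{x})$. This is longer but entirely self-contained: it does not appeal to irreducibility and simultaneously pins down both $a$ and the $\bar{y}$-coefficient. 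The paper's argument is slicker; yours is more elementary and makes explicit use of the basis in Proposition~\ref{xbasisProposition}, which the paper only needs later.
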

\begin{proof}
	\smallskip {\it 1.} The fact that $\ml(\a_{c,q})=\k[\bar{x}]$ yields $\lnd_{\k}(\a_{c,q})=\lnd_{\k[\bar{x}]}(\a_{c,q})$. Thus, for every $\sigma\in\auto_{\k}(\a_{c,q})$ the locally nilpotent derivation $\sigma\xi_{c,q}\sigma^{-1}$ is irreducible, and hence $\sigma\xi_{c,q}\sigma^{-1}=e\xi_{c,q}$ for some $e\in\k^{\star}$. The two claimed properties immediately follow from the fact that $\ker(\sigma\xi_{c,q}\sigma^{-1})=\sigma\big(\ker(\xi_{c,q})\big)$ and $\pl(\sigma\xi_{c,q}\sigma^{-1})=\sigma\big(\pl(\xi_{c,q})\big)$.
	
	\smallskip {\it 2.} Let $\sigma\in\auto_{\k}(\a_{c,q})$ and write $\sigma\xi_{c,q}\sigma^{-1}=e\xi_{c,q}$ for some $e\in\k^{\star}$. A direct computation shows that $\xi_{c,q}\big(\sigma(\bar{y})\big)=e^{-1}\sigma\big(c(\bar{x})\big)$. The fact that $\pl(\sigma\xi_{c,q}\sigma^{-1})=\sigma\big(\pl(\xi_{c,q})\big)$ and that $c(\bar{x})$ generates $\pl(\xi_{c,q})$ then imply that $e^{-1}\sigma\big(c(\bar{x})\big)=u_{\sigma}c(\bar{x})$ for some uniquely determined $u_{\sigma}\in\k^{\star}$. Thus, we have $\xi_{c,q}\big(\sigma(\bar{y})-u_{\sigma}\bar{y}\big)=0$, and hence there exists a unique $h_{\sigma}(x)\in\k[x]$ such that $\sigma(\bar{y})=u_{\sigma}\bar{y}+h_{\sigma}(\bar{x})$.
\end{proof}

\smallskip By Lemma \ref{decompositionLemma}, every automorphism $\sigma\in\auto_{\k}(\a_{c,q})$ leaves $\k[\bar{x}]$ and $\pl(\xi_{c,q})$ invariant and so there exists a unique pair $(e_{\sigma},a_{\sigma})\in\k^{\star}\times\k$ such that $\sigma(\bar{x})=e_{\sigma}\bar{x}+a_{\sigma}$. Lemma \ref{decompositionLemma} also implies that there exists a unique pair $(u_{\sigma},h_{\sigma})\in\k^{\star}\times\k[x]$ such that $\sigma(\bar{y})=u_{\sigma}\bar{y}+h_{\sigma}(\bar{x})$. This yields a map $$\psi:\auto_{\k}(\a_{c,q})\longrightarrow \k^{\star}\times\k^{\star}$$ defined for every $\sigma\in\auto_{\k}(\a_{c,q})$ by $\psi(\sigma)=(e_{\sigma},u_{\sigma})$. As we will see in the following lemma, $\psi$ is a group homomorphism. We will call it the {\it canonical homomorphism} from $\auto_{\k}(\a_{c,q})$ to $\k^{\star}\times\k^{\star}$. The {\it unipotent} $\k$-automorphism group of $\a_{c,q}$, denoted by $\saut_{\k}(\a_{c,q})$, is the subgroup of $\auto_{\k}(\a_{c,q})$ generated by $\exp(\delta)$, where $\delta$ ranges over $\lnd_{\k}(\a_{c,q})$.

\begin{lemma}\label{canonicalHomomorphism}Let $c(x)\in\k[x]$ be a polynomial of degree $n\geq 2$ and $q(x,y)\in\k[x,y]$ be a quasi-monic polynomial of degree $d\geq 2$ with respect to $y$. Then the map $$\psi:\sigma\in \auto_{\k}(\a_{c,q})\longrightarrow (e_{\sigma},u_{\sigma})\in \k^{\star}\times\k^{\star}$$ is a group homomorphism and its kernel is $\saut_{\k}(\a_{c,q})$. In other words, we have the following short exact sequence $$\xymatrix{1 \ar[r] & \saut_{\k}(\a_{c,q}) \ar[r]^-{} & \auto_{\k}(\a_{c,q}) \ar[r]^-{\psi} & \image(\psi) \ar[r] & 1.}$$ 
\end{lemma}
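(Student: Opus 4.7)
The plan is to verify the homomorphism property by a direct composition calculation, then establish the two inclusions $\saut_{\k}(\a_{c,q})\subseteq\ker(\psi)$ and $\ker(\psi)\subseteq\saut_{\k}(\a_{c,q})$ using the structural results of Section~\ref{sec:prelim}. For the homomorphism property, given $\sigma,\tau\in\auto_{\k}(\a_{c,q})$, I substitute the formulas from Lemma~\ref{decompositionLemma} into the composition $\sigma\tau$: applying $\sigma$ to $\tau(\bar{x})=e_{\tau}\bar{x}+a_{\tau}$ and $\tau(\bar{y})=u_{\tau}\bar{y}+h_{\tau}(\bar{x})$ yields $(\sigma\tau)(\bar{x})=e_{\sigma}e_{\tau}\bar{x}+(e_{\tau}a_{\sigma}+a_{\tau})$, while the coefficient of $\bar{y}$ in $(\sigma\tau)(\bar{y})$ is $u_{\sigma}u_{\tau}$, so $\psi(\sigma\tau)=\psi(\sigma)\psi(\tau)$. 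The inclusion $\saut_{\k}(\a_{c,q})\subseteq\ker(\psi)$ follows at once from Theorem~\ref{basisLemma}: every generator of $\saut_{\k}(\a_{c,q})$ has the form $\exp(f(\bar{x})\xi_{c,q})$, which fixes $\bar{x}$ and sends $\bar{y}$ to $\bar{y}+f(\bar{x})c(\bar{x})$, so both $e_{\sigma}$ and $u_{\sigma}$ equal $1$.

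For the reverse inclusion, I start from $\sigma\in\ker(\psi)$ and first argue that $a_{\sigma}=0$. The proof of Lemma~\ref{decompositionLemma} yields $e^{-1}\sigma(c(\bar{x}))=u_{\sigma}c(\bar{x})$ with $\sigma\xi_{c,q}\sigma^{-1}=e\xi_{c,q}$; substituting $\sigma(\bar{x})=\bar{x}+a_{\sigma}$ and $u_{\sigma}=1$ gives $c(\bar{x}+a_{\sigma})=e\,c(\bar{x})$. Comparing leading coefficients forces $e=1$, and then the coefficient of $x^{n-1}$ in $c(x+a_{\sigma})-c(x)$ equals $n c_{n} a_{\sigma}$, which vanishes only when $a_{\sigma}=0$ since $\mathrm{char}(\k)=0$ and $n\ge 2$. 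Therefore $\sigma(\bar{x})=\bar{x}$ and $\sigma(\bar{y})=\bar{y}+h(\bar{x})$ for some $h\in\k[x]$, and the problem reduces to identifying $\sigma$ with an exponential of an element of $\lnd_{\k}(\a_{c,q})=\k[\bar{x}]\xi_{c,q}$.

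The main obstacle is proving the divisibility $c(\bar{x})\mid h(\bar{x})$ in $\k[\bar{x}]$. Since $\sigma(\bar{z})$ is a well-defined element of $\a_{c,q}$ with $c(\bar{x})\sigma(\bar{z})=q(\bar{x},\bar{y}+h(\bar{x}))$, subtracting the relation $c(\bar{x})\bar{z}=q(\bar{x},\bar{y})$ and applying a Taylor expansion in characteristic zero gives
$$ c(\bar{x})\big(\sigma(\bar{z})-\bar{z}\big)=\sum_{k\ge 1}\frac{h(\bar{x})^{k}}{k!}\partial_{y}^{k}q(\bar{x},\bar{y}). $$
The right-hand side lies in $\k[\bar{x},\bar{y}]$ with $\bar{y}$-degree at most $d-1$, so by Proposition~\ref{xbasisProposition} the freeness of $\{1,\bar{y},\ldots,\bar{y}^{d-1}\}$ over $\k[\bar{x}]$ in $\a_{c,q}$ forces $c$ to divide each $\bar{y}^{i}$-coefficient separately. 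The coefficient of $\bar{y}^{d-1}$ is $dq_{d}h(\bar{x})$ with $q_{d}\in\k^{\star}$, so $c\mid h$. Writing $h=cg$ with $g\in\k[x]$, the automorphism $\exp(g(\bar{x})\xi_{c,q})$ agrees with $\sigma$ on $\bar{x}$ and $\bar{y}$; since $c(\bar{x})$ is a non-zero-divisor in the domain $\a_{c,q}$, both automorphisms are forced to take the same value on $\bar{z}$. Hence $\sigma=\exp(g(\bar{x})\xi_{c,q})\in\saut_{\k}(\a_{c,q})$, which completes the reverse inclusion and yields the short exact sequence.
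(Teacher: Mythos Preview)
Your proof is correct, and for the inclusion $\ker(\psi)\subseteq\saut_{\k}(\a_{c,q})$ it follows a genuinely different route from the paper. The paper never establishes $c\mid h$ at this point; instead, after showing $\sigma(\bar x)=\bar x$, it invokes Proposition~\ref{exponentialAutomorphisms}: the map $\eta=\sigma-\mathrm{id}$ is $\k[\bar x]$-linear, one checks $\eta^{m+1}\big(p(\bar x,\bar y)\big)=0$ for every $p$, and clearing a factor $a(\bar x)$ extends this to all of $\a_{c,q}$, so $\eta$ is locally nilpotent and $\sigma$ is an exponential automorphism. Your argument is more constructive: you extract $c\mid h$ from the relation $c(\bar x)\sigma(\bar z)=q(\bar x,\bar y+h)$, write $h=cg$, and exhibit $\sigma=\exp\big(g(\bar x)\xi_{c,q}\big)$ explicitly. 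This has the advantage of identifying the derivation concretely and in fact anticipates the divisibility step the paper only carries out later, in the proof of Theorem~\ref{splittingTheoremCentered}; the paper's approach, by contrast, is shorter and entirely basis-free.

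One step in your write-up is too quick. You assert that ``freeness of $\{1,\bar y,\dots,\bar y^{d-1}\}$ over $\k[\bar x]$ forces $c$ to divide each $\bar y^i$-coefficient'', but freeness alone does not give this: multiplication by $c(\bar x)$ is \emph{not} diagonal in the $\xi_{c,q}$-basis, since $c(\bar x)\bar z=q(\bar x,\bar y)$ mixes basis elements. What makes your conclusion valid is the degree function coming with Proposition~\ref{xbasisProposition}: from $c(\bar x)\big(\sigma(\bar z)-\bar z\big)=P(\bar x,\bar y)$ with $\deg_{\xi_{c,q}}(P)\le d-1$ and $\deg_{\xi_{c,q}}(c)=0$ you get $\deg_{\xi_{c,q}}\big(\sigma(\bar z)-\bar z\big)\le d-1$, hence $\sigma(\bar z)-\bar z$ itself lies in the $\k[\bar x]$-span of $1,\bar y,\dots,\bar y^{d-1}$. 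Now both sides live in $\k[\bar x,\bar y]=\k^{[2]}$ with $\bar y$-degree below $d$, and comparing the $\bar y^{d-1}$-coefficients gives $c\mid dq_d\,h$, whence $c\mid h$. With this clarification inserted, your argument is complete.
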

\begin{proof}Let $\sigma_1,\sigma_2\in\auto_{\k}(\a_{c,q})$ and let us write $\sigma_i(\bar{x})=e_i\bar{x}+a_i$ and $\sigma_i(\bar{y})=u_i\bar{y}+h_i$, where $(e_i,a_i)\in\k^{\star}\times \k$ and $(u_i,h_i)\in\k^{\star}\times \k[\bar{x}]$. A straightforward computation shows that
	$$\begin{array}{ll}
		\sigma_1\sigma_2(\bar{x})=e_1e_2\bar{x}+e_2a_1+a_2, \\\sigma_1\sigma_2(\bar{y})=u_1u_2\bar{y}+u_2h_1+\sigma_1(h_2).
	\end{array}$$
	Therefore, $\psi$ is a group homomorphism. Now let $\sigma\in \ker(\psi)$ and let us first show that $\sigma(\bar{x})=\bar{x}$. We have $\sigma(\bar{x})=\bar{x}+a$ for some $a\in\k$, and since $\sigma(\pl(\xi_{c,q}))=\pl(\xi_{c,q})$ and $c(\bar{x})$ generates $\pl(\xi_{c,q})$ it follows that $c(\bar{x}+a)=ec(\bar{x})$ for some $e\in\k^{\star}$ and since $\k[\bar{x}]=\k^{[1]}$ we have $c(x+a)=ec(x)$. By Taylor-expanding we obtain $$ec(x)=\sum_{k=0}^{n}\frac{a^k}{k!}c^{(k)}(x).$$ Since $c(x),c^{\prime}(x),\ldots,c^{(n)}(x)$ are linearly independent over $\k$ we actually have $a=0$, and hence $\sigma$ induces the identity on $\k[\bar{x}]$.
	
	\smallskip The fact that $\sigma\in \ker(\psi)$ implies that $u_{\sigma}=1$ and so $\sigma(\bar{y})=\bar{y}+h$, where $h\in\k[\bar{x}]$. To prove that $\sigma$ is an exponential automorphism it suffices, by Proposition \ref{exponentialAutomorphisms}, to show that the additive map $\eta=\sigma-{\rm id}_{\a_{c,q}}$ is locally nilpotent. Since $\sigma(a)=a$ for every $a\in\k[\bar{x}]$ it follows that $\eta$ is actually a $\k[\bar{x}]$-module homomorphism. As a consequence, for every nonzero $p(x,y)\in\k[x,y]$ we have $\eta\big(p(\bar{x},\bar{y})\big)=p(\bar{x},\bar{y}+h)-p(\bar{x},\bar{y})$ and then $\eta^{m+1}\big(p(\bar{x},\bar{y})\big)=0$, where $m=\deg_y(p)$. Given any nonzero $g\in\a_{c,q}$ we can find a nonzero $a(x)\in\k[x]$ and $p(x,y)\in\k[x,y]$ such that $a(\bar{x})g=p(\bar{x},\bar{y})$. This gives $a(\bar{x})\eta^{m+1}(g)=0$, where $m=\deg_y(p)$, and then $\eta^{m+1}(g)=0$.
\end{proof} 

Our aim in the rest of this section is to prove that the short exact sequence $$\xymatrix{1 \ar[r] & \saut_{\k}(\a_{c,q}) \ar[r]^-{} & \auto_{\k}(\a_{c,q}) \ar[r]^-{\psi} & \image(\psi) \ar[r] & 1.}$$ is split exact. Let $\G_a$ and $\G_m$ stand for respectively the additive group $(\k,+)$ and the multiplicative group $(\k^{\star},\times)$ of $\k$.
For every positive integer $r$ let $\T_r=\G_m^r$ be the $r$-dimensional algebraic torus over $\k$. By Proposition \ref{standardForm} we may assume without loss of generality that $\a_{c,q}$ is in reduced form. Let us consider the subgroup of $\T_2$ defined by
$$\G_{c,q}=\Big\{(e,u)\in\T_2\mid c(ex)=e^nc(x),\; q(ex,uy)=u^dq(x,y) \Big\}.$$Clearly, $\G_{c,q}$ is an algebraic subgroup of the algebraic torus $\T_2$. For every $(e,u)\in\G_{c,q}$ let $\phi_{e,u}$ be the $\k$-automorphism of $\k[x,y,z]$ defined by $$\phi_{e,u}(x)=ex,\quad \phi_{e,u}(y)=uy,\quad \phi_{e,u}(z)=e^{-n}u^dz.$$ An easy computation shows that $$\phi_{e,u}\big(c(x)z-q(x,y)\big)=u^d\big(c(x)z-q(x,y)\big)$$ and hence $\phi_{e,u}$ induces a $\k$-automorphism $\varphi_{e,u}\in\auto_{\k}(\a_{c,q})$. We have thus constructed a map $$\varphi: (e,u)\in \G_{c,q}\longmapsto \varphi_{e,u}\in \auto_{\k}(\a_{c,q}).$$ Clearly, the map $\varphi$ is an injective group homomorphism. We will call it the {\it canonical homomorphism} from $\G_{c,q}$ into $ \auto_{\k}(\a_{c,q})$.

\begin{theorem}\label{splittingTheoremCentered}Let $c(x)\in\k[x]$ be a polynomial of degree $n\geq 2$ and $q(x,y)\in\k[x,y]$ be a quasi-monic polynomial of degree $d\geq 2$ with respect to $y$. Assume that $\a_{c,q}$ is in reduced form and let $\psi:\auto_{\k}(\a_{c,q})\longrightarrow \T_2$ and $\varphi:\G_{c,q}\longrightarrow\auto_{\k}(\a_{c,q})$ be the canonical homomorphisms. Then we have $\image(\psi)=\G_{c,q}$ and $\psi\varphi={\rm id}_{\G_{c,q}}$. As a consequence, the short exact sequence
	$$\xymatrix{1 \ar[r] & \saut_{\k}(\a_{c,q}) \ar[r]^-{} & \auto_{\k}(\a_{c,q}) \ar[r]^-{\psi} & \image(\psi) \ar[r] & 1}$$ is split and hence the group $\auto_{\k}(\a_{c,q})$ is the inner semidirect product of $\saut_{\k}(\a_{c,q})$ and $\image(\varphi)$.
\end{theorem}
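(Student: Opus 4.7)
The plan is to verify the three equalities $\psi\varphi={\rm id}_{\G_{c,q}}$, $\image(\psi)=\G_{c,q}$, and $\ker(\psi)=\saut_{\k}(\a_{c,q})$, and then deduce the splitting by the standard fact that a group-theoretic section of a short exact sequence produces an inner semidirect product decomposition. The kernel identification is already furnished by Lemma \ref{canonicalHomomorphism}, and $\psi\varphi={\rm id}_{\G_{c,q}}$ is a direct computation from the formulas defining $\varphi_{e,u}$; this also supplies the containment $\G_{c,q}\subseteq\image(\psi)$. The substance of the theorem therefore lies in proving the reverse containment $\image(\psi)\subseteq\G_{c,q}$.

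For this, fix $\sigma\in\auto_{\k}(\a_{c,q})$ and write $\sigma(\bar{x})=e_{\sigma}\bar{x}+a_{\sigma}$, $\sigma(\bar{y})=u_{\sigma}\bar{y}+h_{\sigma}(\bar{x})$ as in Lemma \ref{decompositionLemma}. Since $\sigma$ preserves the principal ideal $\pl(\xi_{c,q})=c(\bar{x})\k[\bar{x}]$, the element $\sigma(c(\bar{x}))$ must be a $\k^{\star}$-scalar multiple of $c(\bar{x})$, giving an identity $c(e_{\sigma}x+a_{\sigma})=\lambda c(x)$ in $\k[x]$. Comparing the coefficients of $x^{n}$ and $x^{n-1}$ and using that $c$ is centered (its coefficient of $x^{n-1}$ vanishes) immediately yields $\lambda=e_{\sigma}^{n}$ and $a_{\sigma}=0$, hence the first defining relation $c(e_{\sigma}x)=e_{\sigma}^{n}c(x)$ of $\G_{c,q}$.

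For the second relation I would apply $\sigma$ to the defining equation $c(\bar{x})\bar{z}=q(\bar{x},\bar{y})$ and subtract $u_{\sigma}^{d}\cdot c(\bar{x})\bar{z}=u_{\sigma}^{d}q(\bar{x},\bar{y})$ to obtain
\[
c(\bar{x})\bigl[e_{\sigma}^{n}\sigma(\bar{z})-u_{\sigma}^{d}\bar{z}\bigr]\;=\;q(e_{\sigma}\bar{x},u_{\sigma}\bar{y}+h_{\sigma}(\bar{x}))-u_{\sigma}^{d}q(\bar{x},\bar{y}).
\]
The right-hand side lies in $\k[\bar{x},\bar{y}]$ and has $\bar{y}$-degree at most $d-1$, the $\bar{y}^{d}$ terms cancelling by quasi-monicity. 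Expanding both sides in the $\xi_{c,q}$-basis from Proposition \ref{xbasisProposition}, comparison of coefficients forces $c(x)$ to divide each $y$-coefficient of $q(e_{\sigma}x,u_{\sigma}y+h_{\sigma}(x))-u_{\sigma}^{d}q(x,y)$ in $\k[x]$. Writing $q(x,y)=\gamma y^{d}+\sum_{i=0}^{d-2}q_{i}(x)y^{i}$ (centered, with $\deg_{x}q_{i}<n$ by the reduced-form hypothesis), the coefficient of $y^{d-1}$ evaluates to $\gamma d u_{\sigma}^{d-1}h_{\sigma}(x)$, so $c\mid h_{\sigma}$ in $\k[x]$.

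The key technical point is then that once $c\mid h_{\sigma}$ is known, the binomial expansion of the coefficient of $y^{i}$ in $q(e_{\sigma}x,u_{\sigma}y+h_{\sigma}(x))$ consists of a single ``leading'' term $u_{\sigma}^{i}q_{i}(e_{\sigma}x)$ together with terms each carrying a positive power of $h_{\sigma}$, all of which vanish modulo $c(x)$. The divisibility condition therefore collapses to $c(x)\mid u_{\sigma}^{i}q_{i}(e_{\sigma}x)-u_{\sigma}^{d}q_{i}(x)$ for each $i$, and the reduced-form bound $\deg_{x}q_{i}<n=\deg c$ upgrades this divisibility to the polynomial equality $q_{i}(e_{\sigma}x)=u_{\sigma}^{d-i}q_{i}(x)$. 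Summing over $i$ yields $q(e_{\sigma}x,u_{\sigma}y)=u_{\sigma}^{d}q(x,y)$, so $(e_{\sigma},u_{\sigma})\in\G_{c,q}$ and the sequence splits via $\varphi$. I expect the main subtle point to be recognizing that the reduced-form hypothesis is used in two distinct and essential ways: centeredness of $c$ (respectively $q$) kills $a_{\sigma}$ (respectively isolates $h_{\sigma}$ cleanly in the $y^{d-1}$ coefficient), while the bound $\deg_{x}q_{i}<\deg c$ converts the various mod-$c$ divisibilities into genuine polynomial equalities.
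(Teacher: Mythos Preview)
Your proof is correct and follows essentially the same route as the paper's: both show $a_\sigma=0$ from centeredness of $c$, then use the relation $c(\bar{x})\bar{z}=q(\bar{x},\bar{y})$ together with the $\xi_{c,q}$-basis of Proposition~\ref{xbasisProposition} to deduce that $f(x,y)=q(e x,u y+h(x))-u^{d}q(x,y)$ is divisible by $c(x)$ in $\k[x,y]$, read off $c\mid h$ from the $y^{d-1}$ coefficient, and then combine $c\mid h$ with $\deg_x q<\deg c$ to obtain $q(ex,uy)=u^{d}q(x,y)$. The only cosmetic difference is that the paper packages the last step as a single congruence $q(ex,uy)\equiv u^{d}q(x,y)\pmod{c(x)}$ while you expand it coefficient-by-coefficient in $y$.
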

\begin{proof}Clearly, we have $\psi\varphi={\rm id}_{\G_{c,q}}$ and hence $\G_{c,q}\subseteq \image(\psi)$. Now let $(e,u)\in\image(\psi)$ and let $\sigma\in\auto_{\k}(\a_{c,q})$ be such that $\sigma(\bar{x})=e\bar{x}+a$ and $\sigma(\bar{y})=u\bar{y}+h(\bar{x})$ for some $a\in\k$ and $h\in\k[x]$. Since $\sigma\big(c(\bar{x})\big)=e^nc(\bar{x})$ and $c(x)$ is centered it follows immediately that $a=0$.
	
	\smallskip Let us now show that $c(x)$ divides $h(x)$. Let $f(x,y) \in \k[x,y]$ be the polynomial defined by $$f(x,y)=q\big(ex,uy+h(x)\big)-u^dq(x,y)$$ and notice that, since $q(x,y)$ is quasi-monic with respect to $y$, we have $\deg_y(f)<d$. Since moreover $q(x,y)$, viewed in $\k[x][y]$, is centered the coefficient of degree $d-1$ with respect to $y$ of $f(x,y)$ is $du^{d-1}h$. On the other hand, from the equality $c(\bar{x})\bar{z}=q(\bar{x},\bar{y})$ in $\a_{c,q}$ it follows that $$c(e\bar{x})\sigma(\bar{z})=q\big(e\bar{x},u\bar{y}+h(\bar{x})\big)=u^dc(\bar{x})\bar{z}+f(\bar{x},\bar{y}),$$ 
	and since $c(e\bar{x})=e^nc(\bar{x})$ it follows that $c(\bar{x})$ divides $f(\bar{x},\bar{y})$ in $\a_{c,q}$. By Proposition \ref{xbasisProposition}, there exists a unique polynomial $g\in \k[x,y,z]$ such that $\deg_y(g)<d$ and $f(\bar{x},\bar{y})=cg(\bar{x},\bar{y},\bar{z})$ in $\a_{c,q}$. The fact that $\deg_y(f)<d$, $\deg_y(g)<d$ and $f\in\k[x,y]$ then imply that $g\in\k[x,y]$ and $f(x,y)=cg(x,y)$. In particular, $du^{d-1}h(\bar{x})=c(\bar{x})g_{d-1}(\bar{x})$, where $g_{d-1}$ is the coefficient of degree $d-1$ of $g$ with respect to $y$. Since $d,u\in\k^{\star}$ it follows that $c(\bar{x})$ divides $h(\bar{x})$ in $\k[\bar{x}]$. Thus, we have $q\big(ex,uy+h(x)\big)-u^dq(x,y)=cg(x,y)$ and $c(x)$ divides $h(x)$, and this clearly implies that $$q(ex,uy)=u^dq(x,y)\mod c(x).$$ Since on the other hand $\deg_x(q)<\deg(c)$ we actually have $q(ex,uy)=u^dq(x,y)$ and hence $(e,u)\in\G_{c,q}$. We have thus proved that $\image(\psi)=\G_{c,q}$, and since moreover $\psi\varphi={\rm id}_{\G_{c,q}}$ the exact sequence $$\xymatrix{1 \ar[r] & \saut_{\k}(\a_{c,q}) \ar[r]^-{} & \auto_{\k}(\a_{c,q}) \ar[r]^-{\psi} & \image(\psi) \ar[r] & 1}$$ is split. Therefore, $\auto_{\k}(\a_{c,q})$ is the inner semidirect product of $\saut_{\k}(\a_{c,q})$ and $\image(\varphi)=\varphi\big(\G_{c,q}\big)$.
\end{proof}

\section{Isotropy group of a derivation of $\a_{c,q}$}
Throughout this section we let $c(x)\in\k[x]$ be a polynomial of degree $n\geq 2$ and $q(x,y)\in\k[x,y]$ be a quasi-monic polynomial of degree $d\geq 2$ with respect to $y$, and assume that the $\k$-algebra $\a_{c,q}$ is in reduced form. Given a $\k$-derivation $\delta$ of $\a_{c,q}$ we let $\auto_{\k}(\a_{c,q},\delta)$ be the subgroup of $\auto_{\k}(\a_{c,q})$ consisting of automorphisms that commute with $\delta$. In this section we study the structure of $\auto_{\k}(\a_{c,q},\delta)$. Our main result is the following Theorem.
\begin{theorem}\label{isotropyGroupTheorem}Let $\delta$ be a non-locally nilpotent derivation of $\a_{c,q}$. Then  $\auto_{\k}(\a_{c,q},\delta)$ is a closed algebraic subgroup of $\auto_{\k}(\a_{c,q})$ of dimension at most $3$. Moreover, it falls into one of the following two cases.
	\begin{enumerate}[1.]
		\item $\auto_{\k}(\a_{c,q},\delta)$ is isomorphic to a closed subgroup of $\G_{c,q}$.
		\item $\auto_{\k}(\a_{c,q},\delta)$ is the semi-direct product of a closed algebraic subgroup of $\saut_{\k}(\a_{c,q})$ isomorphic to the additive group $\G_a$ of the field $\k$ and a closed algebraic subgroup of $\G_{c,q}$.
	\end{enumerate}
\end{theorem}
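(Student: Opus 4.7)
The plan is to restrict the split short exact sequence from Theorem \ref{splittingTheoremCentered} to the isotropy subgroup and analyse its two pieces separately. Setting
$$U_\delta := \saut_{\k}(\a_{c,q}) \cap \auto_{\k}(\a_{c,q},\delta), \qquad T_\delta := \psi\bigl(\auto_{\k}(\a_{c,q},\delta)\bigr) \subseteq \G_{c,q},$$
I obtain a short exact sequence
$$1 \longrightarrow U_\delta \longrightarrow \auto_{\k}(\a_{c,q},\delta) \longrightarrow T_\delta \longrightarrow 1.$$
The task is then to show that $U_\delta$ is trivial or isomorphic to $\G_a$, that $T_\delta$ is a closed subgroup of $\G_{c,q}$, and finally that this sequence splits.

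To handle $U_\delta$, I would use Theorem \ref{basisLemma} and Proposition \ref{exponentialAutomorphisms} to identify $\saut_{\k}(\a_{c,q})$ with $(\k[x],+)$ via $a\mapsto\tau_a:=\exp(a(\bar x)\xi_{c,q})$. The commutation $\tau_a\delta=\delta\tau_a$, applied to $\bar x$ and $\bar y$, is equivalent (the $\bar z$-condition being automatic from the defining relation $c(\bar x)\bar z=q(\bar x,\bar y)$) to
$$\tau_a(\delta(\bar x)) = \delta(\bar x), \qquad \tau_a(\delta(\bar y))-\delta(\bar y) = (ac)'(\bar x)\,\delta(\bar x).$$
Using the $\xi_{c,q}$-basis of Proposition \ref{xbasisProposition} and the associated $\xi_{c,q}$-degree, comparison of the top-degree parts of the first equation forces $\delta(\bar x)\in\k[\bar x]$ whenever some nonzero $a$ lies in $U_\delta$; the analogous argument on the second equation then forces $\deg_{\xi_{c,q}}(\delta(\bar y))\leq 1$, so $\delta(\bar y) = g_0(\bar x) + \phi(\bar x)\bar y$. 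The equation collapses to the linear relation $a\phi c = (ac)' f$ in $\k[x]$, where $f := \delta(\bar x)$. Thus $V := \{a\in\k[x] : \tau_a\in U_\delta\}$ is a $\k$-vector subspace of $\k[x]$ of dimension at most $1$; the degenerate case $V = \k[x]$, which arises precisely when $f=\phi=0$, would imply via direct computation of $\delta^k(\bar z) = g_0^k\,\partial_y^k q/c$ that $\delta$ is locally nilpotent, contradicting the hypothesis. Hence $U_\delta$ is either trivial or a closed one-parameter subgroup of $\saut_{\k}(\a_{c,q})$ isomorphic to $\G_a$.

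For $T_\delta$, I would compute for each $(e,u)\in\G_{c,q}$ the conjugate $\delta_{e,u} := \varphi_{e,u}\delta\varphi_{e,u}^{-1}$; it rescales the coefficients of $\delta$ in the $\xi_{c,q}$-basis. Membership $(e,u)\in T_\delta$ is equivalent to $\delta_{e,u}=\tau\delta\tau^{-1}$ for some $\tau\in\saut_{\k}(\a_{c,q})$, and in view of the structural constraints on $\delta(\bar x)$ and $\delta(\bar y)$ derived above this reduces to finitely many polynomial equations on $(e,u)$. Hence $T_\delta$ is a closed subgroup of $\G_{c,q}$, and so an algebraic group of multiplicative type of dimension at most $2$. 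If $U_\delta$ is trivial, $\psi$ restricts to an isomorphism $\auto_{\k}(\a_{c,q},\delta)\cong T_\delta$, giving case $1$. Otherwise $\auto_{\k}(\a_{c,q},\delta)$ is a linear algebraic group with unipotent radical $U_\delta\cong\G_a$ and reductive quotient $T_\delta$ of multiplicative type; the Levi decomposition in characteristic zero then provides a reductive complement, which I would arrange to lie inside $\varphi(\G_{c,q})$ to obtain the inner semidirect product structure of case $2$. The bounds $\dim U_\delta\leq 1$ and $\dim T_\delta\leq 2$ together give $\dim\auto_{\k}(\a_{c,q},\delta)\leq 3$.

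The main obstacle is the degree analysis for $U_\delta$: one has to carefully extract the linear relation from the $\xi_{c,q}$-graded components of the commutation identities and handle the degenerate case by the explicit local-nilpotence computation indicated above. The algebraicity of $T_\delta$ is more routine once the rescaling formula is made explicit, and the splitting in case $2$ follows from standard structure theory of linear algebraic groups in characteristic zero.
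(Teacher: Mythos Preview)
Your overall strategy---restricting the canonical split exact sequence to the isotropy group and analysing the pieces $U_\delta$ and $T_\delta$ separately---is exactly what the paper does, and your treatment of $U_\delta$ (forcing $\delta(\bar x)\in\k[\bar x]$ and $\deg_{\xi_{c,q}}\delta(\bar y)\le 1$ from the commutation identities, then reducing to a first-order linear relation in $h=ac$ with at most one-dimensional solution space) is correct and parallels the paper's Lemma~\ref{derivationShape} together with equation~(\ref{homogeneousEquation}).

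There is, however, a genuine gap: you never establish that $\auto_{\k}(\a_{c,q},\delta)$ is a \emph{finite-dimensional} algebraic group, i.e.\ that there is a uniform bound on $\deg(\ell_\sigma)$ for $\sigma\in\auto_{\k}(\a_{c,q},\delta)$. This is the content of the paper's Lemma~\ref{closedSubgroup}, whose second half is a four-case analysis (according to whether $c^k\delta(\bar x)$ and $c^k\delta(\bar y)$ lie in $\k[\bar x]$) that extracts a bound on $\deg(h)$ directly from the commutation equations~(\ref{firstCondition}) and~(\ref{secondCondition}). Without this bound your argument breaks in both cases. In case~1, injectivity of $\psi$ on the isotropy group gives only an abstract-group bijection with $T_\delta$; the inverse $(e,u)\mapsto\sigma_{e,u}$ could a~priori involve $\ell_\sigma$ of unbounded degree, so neither the algebraicity of $\auto_{\k}(\a_{c,q},\delta)$ nor the closedness of $T_\delta$ follows. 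In case~2 your appeal to the Levi decomposition already presupposes that $\auto_{\k}(\a_{c,q},\delta)$ is a linear algebraic group. Your sketch for the closedness of $T_\delta$ is also incomplete: the ``structural constraints on $\delta(\bar x)$ and $\delta(\bar y)$'' were derived only under the hypothesis that $U_\delta$ is nontrivial, and even then the existential quantifier over $\tau\in\saut_{\k}(\a_{c,q})$ (an infinite-dimensional group) does not obviously collapse to finitely many polynomial equations without a degree bound on the admissible $\tau$.

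Once algebraicity is in hand (via Lemma~\ref{closedSubgroup} or an equivalent degree-bound argument), the rest of your plan is fine: $T_\delta$ is then the image of an algebraic group under a morphism of algebraic groups, hence closed, and the Levi decomposition in characteristic zero does give the splitting. This last step is a legitimate and somewhat cleaner alternative to the paper's explicit construction of the section $\varrho$ via the normalization $h_{e,u}^{(m)}(0)=0$, but it does not let you bypass the key degree-bound lemma.
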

In order to prove Theorem \ref{isotropyGroupTheorem} we first give a digest on the structure of $\auto_{\k}(\a_{c,q})$ as an {\it ind-group} introduced by Shafarevich \cite{shafarevich_1966}, and we refer to \cite{zaidenberg_2017,furterKraft_2018} for a comprehensive treatment.

\medskip Throughout, all affine varieties are viewed over $\k$ and are equipped with the Zariski topology. An {\it affine ind-variety} is a set $\V$ together with a sequence $$\V_1\subseteq \V_2\subseteq \cdots \V_k\subseteq \cdots$$ of affine varieties such that $$\V=\displaystyle{\bigcup_{k\in\N}\V_k}$$ and for every positive integer $k$ the inclusion map $\V_k\hookrightarrow\V_{k+1}$ is a closed embedding of affine varieties. An affine ind-variety $\V$ carries a natural topology, namely the {\it Zariski topology}, where a subset $\U$ of $\V$ is closed if and only if for every positive integer $k$ the intersection $\U\cap \V_k$ is a closed subset of $\V_k$. In this case, $\U$ can be endowed with an affine ind-variety structure through the sequence $$\U\cap\V_1\subseteq \U\cap\V_2\subseteq \cdots\subseteq \U\cap\V_k\subseteq \cdots.$$ We will say that $\U$ is a closed {\it affine ind-subvariety} of $\V$. A {\it filtration} on an affine ind-variety $\V$ is a sequence $$\W_1\subseteq \W_2\subseteq \cdots \subseteq \W_k\subseteq \cdots$$ of closed subsets of $\V$ such that $\V=\bigcup\W_k$. The filtration is said to be {\it admissible} if it defines an equivalent ind-variety structure on $\V$, i.e., for very $k\geq 1$ there exist $\ell_1,\ell_2\geq 1$ such that $\V_k\subseteq \W_{\ell_1}$ and $\W_k\subseteq \V_{\ell_2}$.

\smallskip Given two affine ind-varieties $\V=\bigcup \V_k$ and $\W=\bigcup\W_{k}$, a map $\varphi:\V\longrightarrow\W$ is an {\it ind-morphism} if for every $k\geq 1$ there exists $\ell\geq 1$ such that $\varphi(\V_k)\subseteq \W_{\ell}$ and the induced map $\V_k\longrightarrow\W_{\ell}$ is a morphism of affine varieties. The product $\V\times \W$ is naturally endowed with an affine ind-variety structure through the filtration $$\V_1\times \W_1\subseteq \V_2\times\W_2\subseteq \cdots \subseteq \V_k\times \W_k\subseteq \cdots.$$
An {\it affine ind-group} $\mathcal{G}$ is an affine ind-variety endowed with a group structure such that the map $(g,h)\in \mathcal{G}\times \mathcal{G}\longrightarrow gh^{-1}\in \mathcal{G}$ is an ind-morphism of affine ind-varieties. A closed subgroup $\mathcal{H}$ of $\mathcal{G}$ endowed with its ind-subvariety structure is an ind-group that will be called a closed {\it ind-subgroup} of $\mathcal{G}$. Given two affine ind-groups $\mathcal{G}$ and $\mathcal{H}$, a group homomorphism $\varphi:\mathcal{G}\longrightarrow \mathcal{H}$ which is also an ind-morphism is called an {\it ind-group morphism}.

\smallskip Let $\a$ be a finitely generated $\k$-algebra, and let us sketch how to endow the abstract group $\auto_{\k}(\a)$ with a natural affine ind-group structure. We refer to \cite[Proposition 2.1]{zaidenberg_2017} and \cite[Section 5]{furterKraft_2018} for more details.

\smallskip\noindent Let $\a=\k[x_1,\ldots,x_r]/\ca$, where $\ca$ is an ideal of $\k[x_1,\ldots,x_r]$, be a presentation of $\a$ and $$\pi:\k[x_1,\ldots,x_r]\longrightarrow\a$$ be the canonical epimorphism. For every $m\geq 1$ let
$$\W_m=\big\{\pi(p)\mid p\in\k[x_1,\ldots,x_r]\; {\rm and}\; \deg(p)\leq m \big\},$$ and
$$\mathcal{V}_m=\big\{\sigma\in\auto_{\k}(\a)\mid \sigma(\bar{x}_i), \sigma^{-1}(\bar{x}_i)\in\W_m\;{\rm for}\; i=1,\ldots,r \big\}.$$ Then the sequence $\mathcal{V}_1\subseteq \mathcal{V}_2\subseteq \cdots\subseteq \mathcal{V}_m\subseteq \cdots$ endows $\auto_{\k}(\a)$ with an affine ind-group structure which actually does not depend on the chosen presentation of $\a$.

\medskip Let $\psi:\auto_{\k}(\a_{c,q})\longrightarrow \T_2$ and $\varphi:\G_{c,q}\longrightarrow\auto_{\k}(\a_{c,q})$ be the canonical homomorphisms. By Theorem \ref{splittingTheoremCentered}, for every $\sigma\in\auto_{\k}(\a_{c,q})$ there exist unique $(e_{\sigma},u_{\sigma})\in\G_{c,q}$ and $\ell_{\sigma}\in\k[x]$ such that $\sigma(\bar{x})=e_{\sigma}\bar{x}$ and $\sigma(\bar{y})=u_{\sigma}\bar{y}+c(\bar{x})\ell_{\sigma}(\bar{x})$. For every $m\geq 1$ let
$$\mathcal{G}_m=\Big\{\sigma\in\auto_{\k}(\a_{c,q})\mid \deg(\ell_{\sigma})\leq m \Big\}.$$ Clearly, every $\mathcal{G}_m$ is an affine variety isomorphic to $\G_{c,q}\times \affine_{\k}^{m+1}$, and also a subgroup of $\auto_{\k}(\a_{c,q})$. Moreover, $\auto_{\k}(\a_{c,q})$ is an increasing union of the $\mathcal{G}_m$'s.

\begin{lemma}\label{nestedStructure}The $\mathcal{G}_m$'s form an admissible filtration of $\auto_{\k}(\a_{c,q})$. Moreover, $\psi$ and $\varphi$ are morphisms of ind-groups.
\end{lemma}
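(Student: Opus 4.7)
The lemma makes two claims: the filtration $\{\mathcal{G}_m\}$ is admissible on $\auto_{\k}(\a_{c,q})$, and the canonical homomorphisms $\psi,\varphi$ are ind-group morphisms. My plan is to compare the $\{\mathcal{G}_m\}$-filtration with the standard filtration $\{\mathcal{V}_m\}$ induced by the presentation $\a_{c,q}=\k[x,y,z]/(c(x)z-q(x,y))$. Establishing set-theoretic inclusions $\mathcal{G}_m\subseteq\mathcal{V}_{k_2(m)}$ and $\mathcal{V}_m\subseteq\mathcal{G}_{k_1(m)}$ with both $k_i$ linear in $m$, together with the observation that these inclusions are morphisms of affine varieties, will give admissibility. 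Under the explicit parametrization $\mathcal{G}_m\cong\G_{c,q}\times\affine_{\k}^{m+1}$ by $(e_\sigma,u_\sigma,\ell_\sigma)$, the ind-morphism statements for $\psi,\varphi$ then reduce to elementary projections and closed immersions.

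For the inclusion $\mathcal{G}_m\subseteq\mathcal{V}_{k_2(m)}$ I would write down explicit formulas. Given $\sigma\in\mathcal{G}_m$, Theorem \ref{splittingTheoremCentered} gives $\sigma(\bar{x})=e_\sigma\bar{x}$ and $\sigma(\bar{y})=u_\sigma\bar{y}+c(\bar{x})\ell_\sigma(\bar{x})$; solving $c(\sigma(\bar{x}))\sigma(\bar{z})=q(\sigma(\bar{x}),\sigma(\bar{y}))$ exactly as in the proof of Theorem \ref{splittingTheoremCentered} yields $\sigma(\bar{z})=e_\sigma^{-n}(u_\sigma^d\bar{z}+g(\bar{x},\bar{y}))$ with $g\in\k[x,y]$ of total degree bounded linearly in $m$ (with constants depending on $n,d$). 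Since $\mathcal{G}_m$ is closed under inversion by a direct group-theoretic computation, $\sigma^{-1}$ obeys the same bounds, and all coefficients depend polynomially on $(e_\sigma,u_\sigma,\ell_\sigma)$, so this gives a closed embedding of $\mathcal{G}_m$ into some $\mathcal{V}_{k_2(m)}$.

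The reverse direction $\mathcal{V}_m\subseteq\mathcal{G}_{k_1(m)}$ is the main obstacle, and this is where the reduced-form hypothesis is crucial. My plan is to put on $\k[x,y,z]$ the weighted grading given by $w(x)=1,\ w(y)=n,\ w(z)=n(d-1)$. A direct check using $\deg_x q<n$ shows $w(c(x)z)=nd$ and $w(q_i(x)y^i)\leq(n-1)+(d-1)n=nd-1$ for every $i<d$, so each application of the substitution $y^d=\alpha^{-1}(c(x)z-\sum_{i<d}q_i(x)y^i)$ preserves the $w$-filtration. Consequently, the reduction of any $p\in\k[x,y,z]$ to its unique $\xi$-basis normal form (Proposition \ref{xbasisProposition}) does not increase $w$. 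Given $\sigma\in\mathcal{V}_m$ with $\sigma(\bar{y})=\pi(p)$ and $\deg p\leq m$, we have $w(p)\leq mn(d-1)$; by uniqueness of the $\xi$-basis the normal form of $p$ is $u_\sigma y+c(x)\ell_\sigma(x)$, so $n+\deg\ell_\sigma\leq mn(d-1)$ and $\sigma\in\mathcal{G}_{mn(d-1)-n}$. The inclusion is a morphism because extracting $(e_\sigma,u_\sigma,\ell_\sigma)$ from the $\xi$-basis coordinates of $\sigma(\bar{x})$ and $\sigma(\bar{y})$ amounts to reading off specific coefficients and then performing the exact polynomial division of the $\bar{y}^0\bar{z}^0$-coefficient of $\sigma(\bar{y})$ by $c(\bar{x})$ to produce $\ell_\sigma$.

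With admissibility established, both ind-morphism claims are immediate. The map $\varphi$ sends $(e,u)\in\G_{c,q}$ to the automorphism whose triple $(e_\sigma,u_\sigma,\ell_\sigma)$ equals $(e,u,0)$, hence factors as the closed immersion $\G_{c,q}\hookrightarrow\mathcal{G}_0\cong\G_{c,q}\times\affine_{\k}^1$, $(e,u)\mapsto(e,u,0)$. The map $\psi$, restricted to each $\mathcal{G}_m\cong\G_{c,q}\times\affine_{\k}^{m+1}$, is projection onto the first factor followed by the inclusion $\G_{c,q}\hookrightarrow\T_2$, a morphism of affine varieties at every level. Both maps are therefore ind-group morphisms.
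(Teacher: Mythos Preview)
Your proof is correct and follows the same overall strategy as the paper: compare the filtration $\{\mathcal{G}_m\}$ with the standard filtration $\{\mathcal{V}_m\}$ coming from the presentation of $\a_{c,q}$, and then read off the ind-morphism properties of $\psi,\varphi$ from the explicit parametrisation $\mathcal{G}_m\cong\G_{c,q}\times\affine_\k^{m+1}$.

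The one genuine difference is in the inclusion $\mathcal{V}_m\subseteq\mathcal{G}_{?}$. The paper asserts the sharp inclusion $\mathcal{V}_m\subseteq\mathcal{G}_m$ in a single line; this is true but rests (silently) on the reduced-form hypothesis $\deg_x q<n$: substituting $z=q(x,0)/c(x)$ into any representative $p$ of $\sigma(\bar y)$ with $\deg p\le m$ and setting $y=0$ shows $\deg h_\sigma\le m$, hence $\deg\ell_\sigma\le m-n\le m$. Your weighted-degree argument with $w(x)=1,\ w(y)=n,\ w(z)=n(d-1)$ is a different and more self-contained route: it yields the weaker bound $\mathcal{V}_m\subseteq\mathcal{G}_{mn(d-1)-n}$, but this is perfectly sufficient for admissibility and makes the role of $\deg_x q<n$ transparent (it is exactly what forces the lower-order terms of $q$ to have $w$-weight $<nd$). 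For closedness, the paper uses the neat description $\mathcal{G}_m=\{\sigma\in\mathcal{V}_{(m+n)d}:\sigma(\bar y)\in\mathcal{W}_{m+n}\}$, whereas you infer it from the parametrisation; your phrase ``this gives a closed embedding'' is a touch quick---the point is that extracting $(e_\sigma,u_\sigma,\ell_\sigma)$ is a morphism on each $\mathcal{V}_k$, so $\mathcal{G}_m$ is the preimage of the closed set $\{\ell_j=0:\ j>m\}$ and hence closed. With that small clarification your argument is complete.
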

\begin{proof}Let $\pi:\k[x,y,z]\longrightarrow\a_{c,q}$ be the canonical epimorphism, and for every $m\geq 1$ let $$\W_m=\big\{\pi(p)\mid \deg(p)\leq m \big\}$$ and $$\V_m=\Big\{\sigma\in\auto_{\k}(\a_{c,q})\mid \sigma(v),\sigma^{-1}(v)\in\W_m\; {\rm for}\; v=\bar{x},\bar{y},\bar{z} \Big\}.$$ Then the sequence $\V_1\subseteq \V_2\subseteq \cdots\subseteq \V_m\subseteq \cdots $ is an admissible filtration of the ind-group $\auto_{\k}(\a_{c,q})$. On the other hand, for every $m\geq 1$ and every $\sigma\in\mathcal{G}_m$ we have $\sigma(\bar{z})\in\W_{(m+n)d}$ and hence $\V_m\subseteq \mathcal{G}_m\subseteq \V_{(m+n)d}$. Moreover, we have $$\mathcal{G}_m=\Big\{ \sigma\in\V_{(m+n)d}\mid \sigma(\bar{y})\in\W_{m+n}\Big\}$$ which shows that $\mathcal{G}_m$ is a closed subset of $\V_{(m+n)d}$. Therefore, the $\mathcal{G}_m$'s form an admissible filtration of $\auto_{\k}(\a_{c,q})$.
	
	\smallskip It remains to prove that $\varphi$ and $\psi$ are ind-group morphisms. Notice first that $\varphi(\G_{c,q})\subseteq \mathcal{G}_1$ and the map $(e,u)\in\G_{c,q}\longrightarrow\varphi_{e,u}\in\mathcal{G}_1$ is a morphism of affine algebraic groups. Therefore, $\varphi$ is an ind-group morphism and $\image(\varphi)$ is a closed algebraic subgroup of $\auto_{\k}(\a_{c,q})$. On the other hand, since for every $m\geq 1$ the affine variety $\mathcal{G}_m$ is isomorphic to $\G_{c,q}\times \affine_{\k}^{m+1}$ it follows that the restriction of $\psi$ to $\mathcal{G}_m$ is a morphism of affine varieties. This proves that $\psi$ is an ind-group morphism.
\end{proof}

\begin{lemma}\label{closedSubgroup}Let $\delta$ be a $\k$-derivation of $\a_{c,q}$. Then $\auto_{\k}(\a_{c,q},\delta)$ is a closed ind-subgroup of $\auto_{\k}(\a_{c,q})$. If moreover $\delta$ is not locally nilpotent then $\auto_{\k}(\a_{c,q},\delta)$ is an algebraic subgroup of $\auto_{\k}(\a_{c,q})$.
\end{lemma}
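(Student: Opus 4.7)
\textbf{Closed ind-subgroup.} I would work through the admissible filtration $\{\mathcal{G}_m\}_{m\geq 1}$ of $\auto_{\k}(\a_{c,q})$ from Lemma \ref{nestedStructure}. An automorphism $\sigma$ commutes with $\delta$ if and only if $\sigma(\delta v) = \delta(\sigma v)$ for $v\in\{\bar{x},\bar{y},\bar{z}\}$. In $\mathcal{G}_m \cong \G_{c,q}\times\affine_{\k}^{m+1}$, parameterizing $\sigma$ by $(e_\sigma,u_\sigma,\ell_\sigma)$ and expanding each difference $\sigma(\delta v)-\delta(\sigma v)$ in the $\xi_{c,q}$-basis of Proposition \ref{xbasisProposition} gives coefficients in $\k[\bar{x}]$ that are polynomial functions of the parameters of $\sigma$. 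The vanishing of each such coefficient is a polynomial condition, so $\auto_{\k}(\a_{c,q},\delta)\cap\mathcal{G}_m$ is Zariski closed in $\mathcal{G}_m$, whence $\auto_{\k}(\a_{c,q},\delta)$ is a closed ind-subgroup.

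\textbf{Algebraicity when $\delta$ is non-LND.} The strategy is to exhibit $m_0$ with $\auto_{\k}(\a_{c,q},\delta)\subseteq\mathcal{G}_{m_0}$, yielding algebraicity. First I control the unipotent part. By Theorem \ref{basisLemma}, every unipotent automorphism has the form $\exp(g\xi_{c,q})$ for some $g\in\k[\bar{x}]$. Since $\ad(g\xi_{c,q})$ is locally nilpotent on $\Der_{\k}(\a_{c,q})$ and strictly lowers $\deg_{\xi_{c,q}}$, the Hadamard identity $\exp(g\xi_{c,q})\,\delta\,\exp(-g\xi_{c,q}) = \sum_{k\geq 0}\ad(g\xi_{c,q})^k(\delta)/k!$ together with a leading-term argument reduces commutativity to $[g\xi_{c,q},\delta]=0$, that is,
\[ g\,[\xi_{c,q},\delta] \;=\; g'(\bar{x})\,\delta(\bar{x})\,\xi_{c,q}. \]
Evaluating this derivation identity on $\bar{y}$ yields the equation $g\,\xi_{c,q}(\delta\bar{y}) = (cg)'(\bar{x})\,\delta(\bar{x})$ in $\a_{c,q}$. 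If $\delta(\bar{x})=0$, the non-local-nilpotency of $\delta$ combined with Theorem \ref{basisLemma} forces $\delta\bar{y}\notin\k[\bar{x}]$, hence $\xi_{c,q}(\delta\bar{y})\neq 0$, so $g=0$ by integrality. If $\delta(\bar{x})\neq 0$, expanding both sides in the $\xi_{c,q}$-basis produces linear first-order ODEs in $g\in\k[\bar{x}]$ with polynomial coefficients, whose polynomial solution space is finite-dimensional. In either case $\saut_{\k}(\a_{c,q})\cap\auto_{\k}(\a_{c,q},\delta)\subseteq\mathcal{G}_{m_1}$ for some $m_1$.

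Next I handle the quotient. The ascending chain $\psi(\auto_{\k}(\a_{c,q},\delta)\cap\mathcal{G}_m)$ in $\G_{c,q}$, an algebraic group of dimension at most two, consists of constructible subgroups and therefore stabilizes at some $m_2$. Given any $\sigma\in\auto_{\k}(\a_{c,q},\delta)$, I pick $\sigma'\in\auto_{\k}(\a_{c,q},\delta)\cap\mathcal{G}_{m_2}$ with $\psi(\sigma)=\psi(\sigma')$; then $\sigma\sigma'^{-1}\in\saut_{\k}(\a_{c,q})\cap\auto_{\k}(\a_{c,q},\delta)\subseteq\mathcal{G}_{m_1}$. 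A direct computation shows $\ell_{\sigma\sigma'^{-1}} = u_\sigma^{-1}(\ell_\sigma-\ell_{\sigma'})$, whence $\deg\ell_\sigma \leq \max(m_1,m_2)=:m_0$. Therefore $\auto_{\k}(\a_{c,q},\delta)\subseteq\mathcal{G}_{m_0}$, and being a closed subset of a finite-dimensional affine variety, it is an algebraic subgroup.

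\textbf{Main obstacle.} The hardest step is bounding the unipotent part: justifying the degree-lowering adjoint-exponential reduction to the Lie-bracket equation $[g\xi_{c,q},\delta]=0$, and then carrying out the case analysis that uses the non-LND hypothesis. The most delicate subcase is $\delta(\bar{x})\neq 0$ with $\xi_{c,q}(\delta\bar{y})\in\k[\bar{x}]$, where the equation $gW = (cg)'A$ becomes a genuine polynomial ODE in $\k[\bar{x}]$ whose finite-dimensional polynomial solution space must be verified by comparing $\bar{x}$-degrees and matching leading coefficients.
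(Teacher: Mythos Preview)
Your treatment of the closed ind-subgroup claim is essentially the paper's argument: the paper makes it explicit by choosing $k$ with $c^k(\bar{x})\delta(\bar{x})=f_1(\bar{x},\bar{y})$ and $c^k(\bar{x})\delta(\bar{y})=f_2(\bar{x},\bar{y})$ in $\k[\bar{x},\bar{y}]=\k^{[2]}$, and then the commutation conditions become the two polynomial identities
\[
f_1(ex,uy+h)=e^{kn+1}f_1(x,y),\qquad f_2(ex,uy+h)=e^{kn}uf_2(x,y)+e^{kn}h'(x)f_1(x,y),
\]
which are visibly polynomial in $(e,u)$ and the coefficients of $\ell$.

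For algebraicity, however, your route diverges from the paper's and contains a genuine gap. You try to bound the unipotent part and the image under $\psi$ separately, and for the latter you assert that the ascending chain $\psi\big(\auto_{\k}(\a_{c,q},\delta)\cap\mathcal{G}_m\big)$ of closed subgroups of $\G_{c,q}$ ``therefore stabilizes.'' This inference is invalid: ascending chains of closed subgroups of a diagonalizable group need not stabilize (for instance $\mu_{2}\subset\mu_{4}\subset\mu_{8}\subset\cdots$ inside $\G_m$). Noetherianity gives you the descending chain condition on closed subsets, not the ascending one, and bounding dimension only stabilizes the identity components, not the component groups. So the step that produces your $m_2$ is unjustified, and without it your combination argument does not go through. (Your Hadamard reduction to $[g\xi_{c,q},\delta]=0$ is salvageable---local nilpotency of $\ad(g\xi_{c,q})$ on $\der_{\k}(\a_{c,q})$ plus the standard ``apply $\ad^{N-2}$'' trick does give the implication---but the phrase ``strictly lowers $\deg_{\xi_{c,q}}$'' is not literally correct and would need to be replaced by that argument.)

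The paper avoids the stabilization issue entirely by bounding $\deg(h)$ \emph{uniformly} for every $\sigma\in\auto_{\k}(\a_{c,q},\delta)$, not just the unipotent ones. It does a four-way case split on the shape of $\delta$: (i) $f_1\notin\k[x]$; (ii) $f_1\in\k[x]$ and $\deg_y f_2\ge 2$; (iii) $f_1\in\k[x]$ and $\deg_y f_2=1$; (iv) $f_1,f_2\in\k[x]$. In (i) and (ii) one compares the $y^{s-1}$-coefficients in the displayed identities above to read off $\deg(h)\le\deg_x f_i$. In (iii) one gets $\delta(\bar{x})=g(\bar{x})$, $\delta(\bar{y})=a(\bar{x})\bar{y}+b(\bar{x})$, and the equation $a(ex)h-gh'=ub(x)-b(ex)$; either the left side has the expected degree and $\deg(h)\le\deg(b)$, or the leading terms cancel and $\deg(h)=\lc(a)/\lc(g)$ is a fixed number depending only on $\delta$. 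In (iv) the non-LND hypothesis forces $f_1\neq 0$, and one gets $\deg(h)\le\deg(f_2)+1$. In every case the bound is independent of $(e,u)$, so $\auto_{\k}(\a_{c,q},\delta)\subset\mathcal{G}_{m_0}$ for an $m_0$ determined by $\delta$ alone. This direct coefficient comparison is exactly what your approach is missing.
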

\begin{proof}In order to prove that $\auto_{\k}(\a_{c,q},\delta)$ is a closed subgroup of $\auto_{\k}(\a_{c,q})$ we need to show that $\auto_{\k}(\a_{c,q},\delta)\cap \mathcal{G}_m$ is closed in $\mathcal{G}_m$ for every $m\geq 0$, where $$\mathcal{G}_m=\big\{ \sigma\in\auto_{\k}(\a_{c,q})\mid \deg(\ell)\leq m\big\},$$ and $\ell(x)\in\k[x]$ is such that $\sigma(\bar{y})=u\bar{y}+c(\bar{x})\ell(\bar{x})$.
	
	\medskip Let $k\geq 0$ be such that \begin{equation}\label{cancellationOfZ}c^{k}(\bar{x})\delta(\bar{x})=f_1(\bar{x},\bar{y}) \quad \text{and} \quad c^{k}(\bar{x})\delta(\bar{y})=f_2(\bar{x},\bar{y}),\end{equation} where $f_i\in \k[x,y]=K^{[2]}$ depends only on $\delta$. Let $\sigma\in\mathcal{G}_m$ and write $\sigma(\bar{x})=e\bar{x}$ and $\sigma(\bar{y})=u\bar{y}+h(\bar{x})$, where $(e,u)\in\G_{c,q}$ and $h(x)=c(x)\ell(x)$ and $\deg(\ell)\leq m$. Then by applying $\sigma$ to the equalities in (\ref{cancellationOfZ}), and taking into account $c(ex)=e^nc(x)$, we have $$\begin{array}{ccc}f_1(e\bar{x},u\bar{y}+h(\bar{x}))&=&
		e^{kn}c^k(\bar{x})\sigma\delta(\bar{x}),\\
		f_2(e\bar{x},u\bar{y}+h(\bar{x}))&=&e^{kn}c^k(\bar{x})\sigma\delta(\bar{y}).\end{array}$$
	On the other hand, we have
	$$\begin{array}{ccc}
		e^{kn}c^k(\bar{x})\delta\sigma(\bar{x})&=&e^{kn+1}f_1(\bar{x},\bar{y}),\\
		e^{kn}c^k(\bar{x})\delta\sigma(\bar{y})&=&e^{kn}f_2(\bar{x},\bar{y})+e^{kn}h^{\prime}(\bar{x})f_1(\bar{x},\bar{y}).
	\end{array}$$
	Since $\a_{c,q}$ is an integral domain and $c(\bar{x})\bar{z}=q(\bar{x},\bar{y})$ it follows that $\sigma\delta=\delta\sigma$ if and only if
	\begin{equation}\label{firstCondition}f_1(e\bar{x},u\bar{y}+h(\bar{x}))-e^{kn+1}f_1(\bar{x},\bar{y})=0
	\end{equation}
	and
	\begin{equation}\label{secondCondition}f_2(e\bar{x},u\bar{y}+h(\bar{x}))-e^{kn}uf_2(\bar{x},\bar{y})-e^{kn}h^{\prime}(\bar{x})f_1(\bar{x},\bar{y})=0.
	\end{equation}
	Since moreover $\k[\bar{x},\bar{y}]=\k^{[2]}$ the equations (\ref{firstCondition}) and (\ref{secondCondition}) yield a system of polynomial equations in terms of $e,u$ and the coefficients of $\ell(x)$, where $h(x)=c(x)\ell(x)$. This proves that $\auto_{\k}(\a_{c,q},\delta)\cap \mathcal{G}_m$ is closed in $\mathcal{G}_m$, and since this holds for every $m\geq 0$ it follows that $\auto_{\k}(\a_{c,q},\delta)$ is a closed ind-subgroup of $\auto_{\k}(\a_{c,q})$.
	
	\medskip Assume now that $\delta$ is not locally nilpotent. In order to prove that $\auto_{\k}(\a_{c,q},\delta)$ is an algebraic subgroup of $\auto_{\k}(\a_{c,q})$ we only need to show that $\auto_{\k}(\a_{c,q},\delta)\subseteq \mathcal{G}_m$ for some positive integer $m$. That is, for every $\sigma\in\auto_{\k}(\a_{c,q},\delta)$, with $\sigma(\bar{y})=u\bar{y}+h(\bar{x})$, we have $\deg(h)\leq m+n$ where $n=\deg(c)$.
	
	\medskip Let $\sigma\in\auto_{\k}(\a_{c,q},\delta)$, and let $(e,u)\in \G_{c,q}$ and $h(x)\in c(x)\k[x]$ be such that $$\sigma(\bar{x})=e\bar{x},\quad \sigma(\bar{y})=u\bar{y}+h(\bar{x}).$$ Then we have several possibilities, depending on the nature of $f_1$ and $f_2$.
	
	\medskip -- The case $f_1(x,y)\in K[x,y] \setminus \k[x]$. From the equation (\ref{firstCondition}) we have
	$$	f_1(e\bar{x},u\bar{y}+h(\bar{x}))= e^{kn+1} f_1(\bar{x}, \bar{y})$$
	and since $\k[\bar{x},\bar{y}]=K^{[2]}$ we actually have \begin{equation}\label{conditioncommefromxx} f_1(ex,uy+h(x))=e^{kn+1}f_1(x,y).\end{equation}
	Let $s = \deg_y(f_1) \geq 1$ and let us write $$ f_1(x, y) = \sum_{i=0}^{s} f_{1,i}(x) y^i,$$ where $ f_{1,i}(x) \in \k[x]$.
	By looking at the coefficient of  degree $s-1$ with respect to $y$ in both sides of \eqref{conditioncommefromxx}
	we obtain $$u^{s-1} f_{1,s-1}(ex)+su^{s-1} f_{1,s}(ex)h(x)=e^{kn+1}f_{1,s-1}(x).$$ Since $f_{1,s}(x)\neq0$ we have $\deg(h)\leq \deg_{x}(f_{1,s-1})\leq \deg_{x}(f_{1})$.
	
	\medskip -- The case $f_1(x,y)=f_1(x)\in\k[x]$ and $s=\deg_y(f_2)\geq 2$. From the equation (\ref{secondCondition}) we have 
	$$f_2(e\bar{x},u\bar{y}+h(\bar{x}))=e^{kn}u f_2(\bar{x},\bar{y}) + e^{kn}h^{\prime}(\bar{x})f_1(\bar{x})$$
	and since $\k[\bar{x},\bar{y}]=\k^{[2]}$ we actually have \begin{equation}\label{conditioncommefromyy} f_2(ex,uy+h(x))=e^{kn}u f_2(x, y)+e^{kn} h^{\prime}(x)f_1(x).\end{equation}
	Let us write $$ f_2(x, y) = \sum_{i=0}^{s} f_{2,i}(x) y^i,
	$$ where $ f_{2,i}(x) \in \k[x]$. By looking at the coefficient of  degree $s-1\geq 1$ with respect to $y$ in both sides of \eqref{conditioncommefromyy}
	we obtain $$u^{s-1} f_{2,s-1}(ex)+su^{s-1} f_{2,s}(ex)h(x)=e^{kn}uf_{2,s-1}(x).$$ Since $f_{2,s}(x)\neq0$, we have $\deg(h)\leq \deg_{x}(f_{2,s-1})\leq \deg_{x}(f_{2})$.
	
	\medskip -- The case $f_1(x,y)=f_1(x)\in\k[x]$ and $s=\deg_y(f_2)=1$. To simplify, we let $\xi$ stand for $\xi_{c,q}$ and recall that $\deg_{\xi}$ denotes the degree function associated to $\xi$. The assumption that $\deg_y(f_2)=1$ implies that $\deg_{\xi}(\delta(\bar{y}))=1$ and from Proposition \ref{xbasisProposition} it follows that \begin{equation}\label{equationThirdCase}\delta(\bar{y})=a(\bar{x})\bar{y}+b(\bar{x})\end{equation} where $a(x),b(x)\in\k[x]$ and $a(x)\neq 0$. On the other hand, since $c(\bar{x})^k\delta(\bar{x})=f_1(\bar{x})\in\k[\bar{x}]$ and $c(\bar{x})\neq 0$ and $\k[\bar{x}]$ is factorially closed in $\a_{c,q}$ it follows that $\delta(\bar{x})=g(\bar{x})\in\k[\bar{x}]$.
	
	\medskip By applying $\sigma$ to both sides of (\ref{equationThirdCase}), and taking into account $\sigma\delta=\delta\sigma$ and Proposition \ref{xbasisProposition} and $\k[\bar{x},\bar{y}]=\k^{[2]}$ we obtain $a(ex)=a(x)$ and \begin{equation}\label{equationThirdCaseTwo}a(ex)h(x)-g(x)h^{\prime}(x)=ub(x)-b(ex).
	\end{equation} 
	If we have $$\deg\big(a(ex)h(x)-g(x)h^{\prime}(x)\big)=\max\big(\deg(a(ex)h(x)),\deg(g(x)h^{\prime}(x))\big)$$ then from (\ref{equationThirdCaseTwo}) it follows in particular that $\deg(h)\leq \deg(b)$ and we are done. Otherwise, $g(x)$ is nonzero and the leading coefficients of $a(ex)h(x)$ and $g(x)h^{\prime}(x)$ cancel out. Thus, if we let $\lc(a)$ and $\lc(g)$ be respectively the leading coefficients of $a(x)$ and $g(x)$ then $\deg(h)=\lc(a)\lc(g)^{-1}$, which proves that $\deg(h)$ depends only on $\delta$ and not on the automorphism $\sigma\in\auto_{\k}(\a_{c,q},\delta)$.
	
	\medskip -- The case $f_1(x,y)=f_1(x)\in\k[x]$ and $f_2(x,y)=f_2(x)\in\k[x]$. From the assumption that $\delta$ is not locally nilpotent we deduce that $f_1(x)\neq 0$. On the other hand, using similar arguments as in the first three cases we obtain $$f_2(ex)=e^{kn}uf_2(x)+h^{\prime}(x)f_1(x)$$ and hence $\deg(h)\leq \deg(f_2)+1$.
\end{proof}

\begin{lemma}\label{derivationShape}Let $\delta$ be a $\k$-derivation of $\a_{c,q}$ such that $\auto_{\k}(\a_{c,q},\delta)\cap \saut_{\k}(\a_{c,q})$ is nontrivial. Then $$\delta(\bar{x})=g(\bar{x})\quad \mathrm{and}\quad \delta(\bar{y})=a(\bar{x})\bar{y}+b(\bar{x})$$ for some $g(x),a(x),b(x)\in\k[x]$.
\end{lemma}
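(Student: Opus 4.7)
The plan is to exploit a nontrivial element $\sigma\in\auto_{\k}(\a_{c,q},\delta)\cap\saut_{\k}(\a_{c,q})$ to constrain $\delta(\bar{x})$ and $\delta(\bar{y})$ via the commutation relation $\sigma\delta=\delta\sigma$, measuring everything by the $\xi_{c,q}$-degree.

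First I would pin down $\sigma$ explicitly. Since $\sigma\in\ker(\psi)$, Lemma~\ref{canonicalHomomorphism} gives $\sigma(\bar{x})=\bar{x}$ and $\sigma(\bar{y})=\bar{y}+h(\bar{x})$ with $h\in\k[\bar{x}]$ nonzero, and Proposition~\ref{exponentialAutomorphisms} writes $\sigma=\exp(\xi')$ for some locally nilpotent $\xi'\in\lnd_{\k}(\a_{c,q})$. Since $\sigma$ fixes $\bar{x}$, we have $\xi'(\bar{x})=0$, so $\xi'\in\lnd_{\k[\bar{x}]}(\a_{c,q})=\k[\bar{x}]\,\xi_{c,q}$ by Theorem~\ref{basisLemma}. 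Hence $\xi'=\alpha(\bar{x})\xi_{c,q}$ for some nonzero $\alpha\in\k[\bar{x}]$, and from $\xi_{c,q}^{2}(\bar{y})=0$ one obtains $h=\alpha\,c(\bar{x})$.

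Next I would apply $\sigma\delta=\delta\sigma$ to $\bar{x}$, obtaining $\sigma(\delta(\bar{x}))=\delta(\bar{x})$, so that $\delta(\bar{x})$ is a $\sigma$-invariant. A routine argument for exponentials of locally nilpotent derivations shows that the ring of $\sigma$-invariants equals $\ker(\xi')=\ker(\alpha\xi_{c,q})=\ker(\xi_{c,q})=\k[\bar{x}]$, where the second equality uses that $\a_{c,q}$ is an integral domain and $\alpha\neq 0$. This yields $\delta(\bar{x})=g(\bar{x})$ for some $g\in\k[x]$, settling the first assertion.

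For the second assertion, set $F:=\delta(\bar{y})$. Applying $\sigma\delta=\delta\sigma$ to $\bar{y}$ and using what we already know gives
\[\sigma(F)-F=\delta(h(\bar{x}))=h'(\bar{x})\,g(\bar{x})\in\k[\bar{x}],\]
whence $\deg_{\xi_{c,q}}(\sigma(F)-F)\leq 0$. Since $\alpha\in\ker(\xi_{c,q})$, a direct check gives $(\alpha\xi_{c,q})^{k}(F)=\alpha^{k}\xi_{c,q}^{k}(F)$ and hence
\[\sigma(F)-F=\sum_{k\geq 1}\frac{\alpha^{k}\xi_{c,q}^{k}(F)}{k!}.\]
Writing $m=\deg_{\xi_{c,q}}(F)$, the nonzero summands have strictly decreasing $\xi_{c,q}$-degrees $m-1>m-2>\cdots$; as $\deg_{\xi_{c,q}}$ is a genuine degree function on the integral domain $\a_{c,q}$, no cancellation among summands of pairwise distinct degrees can occur, so $\deg_{\xi_{c,q}}(\sigma(F)-F)=m-1$ whenever $m\geq 1$. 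The bound $m-1\leq 0$ then forces $m\leq 1$, and by Proposition~\ref{xbasisProposition} together with $d\geq 2$ the only $\xi_{c,q}$-basis elements of degree at most one are $1$ and $\bar{y}$, yielding $F=a(\bar{x})\bar{y}+b(\bar{x})$ as required. The delicate step is precisely this non-cancellation observation, which rests on the strict monotonicity of the $\xi_{c,q}$-degrees of the successive summands.
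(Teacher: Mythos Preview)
Your proof is correct and follows essentially the same strategy as the paper: write the nontrivial unipotent automorphism as $\sigma=\exp(\alpha(\bar{x})\xi_{c,q})$, use it to force $\delta(\bar{x})\in\ker(\xi_{c,q})=\k[\bar{x}]$, and then bound $\deg_{\xi_{c,q}}(\delta(\bar{y}))\leq 1$ so that the $\xi_{c,q}$-basis of Proposition~\ref{xbasisProposition} yields the claimed shape. The only tactical difference is that the paper first passes from $\sigma\delta=\delta\sigma$ to $\eta\delta=\delta\eta$ (with $\eta=\alpha\xi_{c,q}$, via the logarithm formula) and then reads off $\eta(\delta(\bar{y}))=\delta(\eta(\bar{y}))\in\k[\bar{x}]$ in one line, whereas you stay at the level of $\sigma$ and extract the same degree bound from the exponential series; both routes are equivalent.
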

\begin{proof}Let $\sigma\in \auto_{\k}(\a_{c,q},\delta)\cap \saut_{\k}(\a_{c,q})$ be a nontrivial automorphism and let us write $\sigma=\exp(\eta)$ where, by Theorem \ref{basisLemma}, $\eta=\ell(\bar{x})\xi_{c,q}$ for some nonzero $\ell(x)\in\k[x]$. The assumption that $\sigma$ and $\delta$ commute implies that $\eta\delta=\delta\eta$, and hence $\ker(\eta)=\k[\bar{x}]$ is stable under $\delta$. This proves that $\delta(\bar{x})=g(\bar{x})$ for some $g(x)\in\k[x]$. On the other hand, since $c(\bar{x})=\xi_{c,q}(\bar{y})$ generates the plinth ideal $\pl(\xi_{c,q})$ it follows that $\pl(\eta)$ is generated by $\ell(\bar{x})c(\bar{x})=\eta(\bar{y})$. Since moreover $\eta\delta(\bar{y})=\delta(\eta(\bar{y}))\in\k[\bar{x}]$ it follows that $\delta(\bar{y})=a(\bar{x})\bar{y}+b(\bar{x})$ for some $a(x), b(x)\in\k[x]$.
\end{proof}

\begin{proof}[Proof of Theorem \ref{isotropyGroupTheorem}] By Lemma \ref{closedSubgroup}, $\auto_{\k}(\a_{c,q},\delta)$ is a closed algebraic subgroup of the ind-group $\auto_{\k}(\a_{c,q})$. Let $\psi: \auto_{\k}(\a_{c,q})\longrightarrow\T_2$ be the canonical homomorphism, and recall that, by Theorem \ref{splittingTheoremCentered}, $\image(\psi)=\G_{c,q}$ and, by Lemma \ref{nestedStructure}, $\psi$ is an ind-group homomorphism. Therefore the restriction of $\psi$ to $\auto_{\k}(\a_{c,q},\delta)$, say $\vartheta$, is an algebraic group homomorphism. Moreover, $\image(\vartheta)$ is a closed algebraic subgroup of $\G_{c,q}$ and $\ker(\vartheta)=\auto_{\k}(\a_{c,q},\delta)\cap \saut_{\k}(\a_{c,q})$ is a closed algebraic subgroup of $\saut_{\k}(\a_{c,q})$. Thus, we have one of the following two cases.
	
	\medskip -- The homomorphism $\vartheta$ is injective. In this case, $\vartheta$ induces an algebraic group isomorphism from $\auto_{\k}(\a_{c,q},\delta)$ onto a closed algebraic subgroup of $\G_{c,q}$. Since the dimension of $\G_{c,q}$ is a most two it follows that $\auto_{\k}(\a_{c,q},\delta)$ is a most two-dimensional. 
	
	\medskip -- The morphism $\vartheta$ is not injective. We will show in this case that the exact sequence
	$$\xymatrix{1 \ar[r] & \ker(\vartheta) \ar[r]^-{} & \auto_{\k}(\a_{c,q},\delta) \ar[r]^-{\vartheta} & \image(\vartheta) \ar[r] & 1}$$ is split and $\ker(\vartheta)$, as a closed algebraic subgroup of $\auto_{\k}(\a_{c,q},\delta)$, is isomorphic to $\G_a=(\k,+)$.
	
	\medskip From Lemma \ref{derivationShape} it follows that there exist $g(x),a(x),b(x)\in\k[x]$ such that $$\delta(\bar{x})=g(\bar{x})\quad \mathrm{and}\quad \delta(\bar{y})=a(\bar{x})\bar{y}+b(\bar{x}).$$ Since $\delta$ is not locally nilpotent it follows that either $a(x)\neq 0$ or $g(x)\neq 0$. Now let $$L=\big\{\ell(x)\in\k[x]\mid \exp(\ell(\bar{x})\xi_{c,q})\in\ker(\vartheta) \big\}.$$ Then $L$ is a nonzero $\k$-vector subspace of $\k[x]$. On the other hand, for every $\ell(x)\in\k[x]$ the automorphism $\sigma=\exp(\ell(\bar{x})\xi_{c,q})$ commutes with $\delta$ if and only if $\sigma\delta(\bar{y})=\delta\sigma(\bar{y})$. Taking into account the fact that $\k[\bar{x},\bar{y}]=\k^{[2]}$, this is equivalent to \begin{equation}\label{homogeneousEquation}
		a(x)h(x)-g(x)h^{\prime}(x)=0,
	\end{equation}
	where $h(x)=c(x)\ell(x)$. Therefore, $\ell(x)\in L$ if and only if $h(x)=c(x)\ell(x)$ satisfies the equation (\ref{homogeneousEquation}). The fact that $L$ is nonzero, and $a(x)$ and $g(x)$ are not both zero then implies that $g(x)\neq 0$. Let $\ell_1(x),\ell_2(x)\in L$ be nonzero and let $h_i(x)=c(x)\ell_i(x)$. Then we have $$\frac{h_1^{\prime}(x)}{h_1(x)}=\frac{h_2^{\prime}(x)}{h_2(x)}=\frac{a(x)}{g(x)},$$ and then $$\left(\frac{h_1(x)}{h_2(x)}\right)^{\prime}=0.$$ This proves that $h_2(x)=\mu h_1(x)$ for some $\mu\in\k$, and hence the $\k$-vector space $L$ is one-dimensional. Let us fix a nonzero polynomial $\ell_0(x)\in L$ and write $h_0(x)=c(x)\ell_0(x)$, and let $v\in\k$ be such that $h_0(v)\neq 0$. The map $$\ell(x)\in L\longmapsto c(v)\ell(v)\in \k$$ is then an isomorphism of $\k$-vector spaces, and hence the map $$\sigma=\exp(\ell(\bar{x})\xi_{c,q})\in\ker(\vartheta)\longmapsto c(v)\ell(v)\in \k$$ is an algebraic group isomorphism from the closed algebraic subgroup $\ker(\vartheta)$ of $\auto_{\k}(\a_{c,q},\delta)$ onto $\G_a$.
	
	\medskip Let $\sigma\in\auto_{\k}(\a_{c,q})$, and write $\sigma(\bar{x})=e\bar{x}$ and $\sigma(\bar{y})=u\bar{y}+h(\bar{x})$. Then $\sigma\delta=\delta\sigma$ if and only if $\sigma\delta(\bar{x})=\delta\sigma(\bar{x})$ and $\sigma\delta(\bar{y})=\delta\sigma(\bar{y})$. A direct computation shows that this is equivalent to $g(ex)=eg(x)$, $a(ex)=a(x)$ and
	\begin{equation}\label{nonHomogeneousEquation}a(x)h(x)-g(x)h^{\prime}(x)=ub(x)-b(ex).
	\end{equation}
	The solutions of the equation (\ref{nonHomogeneousEquation}) have the form $h(x)=h_1(x)+\mu h_0(x)$, where $h_1(x)$ is a particular solution of (\ref{nonHomogeneousEquation}) and $\mu\in\k$.
	
	\medskip Let $m$ be the multiplicity of $0$ as a root of the polynomial $h_0(x)$. Then there exists a unique $\ell_{e,u}(x)\in\k[x]$ such that $h_{e,u}(x)=c(x)\ell_{e,u}(x)$ satisfies the equation (\ref{nonHomogeneousEquation}) and $h_{e,u}^{(m)}(0)=0$, where $h_{e,u}^{(m)}(x)$ is the $m$-th derivative of $h_{e,u}(x)$. Let $\sigma_{e,u}\in\auto_{\k}(\a_{c,q},\delta)$ be the $\k$-automorphism defined by $\sigma_{e,u}(\bar{x})=e\bar{x}$ and $\sigma_{e,u}(\bar{y})=u\bar{y}+h_{e,u}(\bar{x})$, and let us show that the map $$\varrho:(e,u)\in\image(\vartheta)\longmapsto \sigma_{e,u}\in\auto_{\k}(\a_{c,q},\delta)$$ is a group homomorphism.
	
	\medskip Let $(e_1,u_1),(e_2,u_2)\in\image(\vartheta)$ and to simplify let us write $\varrho(e_i,u_i)=\sigma_i$ and $\sigma_i(\bar{y})=u_i\bar{y}+h_i(\bar{x})$. A direct computation shows that $\sigma_1\sigma_2(\bar{x})=e_1e_2\bar{x}$ and $$\sigma_1\sigma_2(\bar{y})=u_1u_2\bar{y}+u_2h_1(\bar{x})+h_2(e_1\bar{x}).$$ Since $\sigma_1\sigma_2\in\auto_{\k}(\a_{c,q},\delta)$ it follows that $u_2h_1(\bar{x})+h_2(e_1\bar{x})$ satisfies the equation (\ref{nonHomogeneousEquation}) corresponding to $(e,u)=(e_1e_2,u_1u_2)$. On the other hand, we have $$\big(u_2h_1(x)+h_2(e_1x)\big)^{(m)}=u_2h_2^{(m)}(x)+e_1^mh_1^{(m)}(e_1x),$$ which clearly shows that $$\big(u_2h_1(x)+h_2(e_1x)\big)^{(m)}(0)=0.$$ Therefore, $\varrho(e_1e_2,u_1u_2)=\sigma_1\sigma_2=\varrho(e_1,u_1)\varrho(e_2,u_2)$. This finally proves that $\auto_{\k}(\a_{c,q},\delta)$ is the semi-direct product of $\ker(\vartheta)\cong \G_a$ and $\image(\vartheta)$ which is a closed algebraic subgroup of $\G_{c,q}$. As a consequence, the dimension of $\auto_{\k}(\a_{c,q},\delta)$ is at most three.
\end{proof}

The following is a typical example where $\auto_{\k}(\a_{c,q},\delta)$ has dimension three.
\begin{example}Let $c(x)=x^n$ and $q(x,y)=y^d$, where $n,d\geq 2$, and consider $$\a_{c,q}=\k[x,y,z]/(x^nz-y^d).$$ Clearly, we have $\G_{c,q}=\T_2$ the two-dimensional algebraic torus over $\k$.
	
	\medskip Let $a>n$ be an integer and $b\in\k$, and consider the $\k$-derivation $\Delta$ of $\k[x,y,z]$ defined by $$\Delta(x)=x,\quad \Delta(y)=ay+bx^n,\quad \Delta(z)=(ad-n)z+dby^{d-1}.$$ A direct computation shows that $\Delta(x^nz-y^d)=ad(x^nz-y^d)$, and hence $\Delta$ induces a $\k$-derivation $\delta$ of $\a_{c,q}$. The equation (\ref{homogeneousEquation}) writes in this case as $$ah(x)-xh^{\prime}(x)=0,$$ and its general solution is $\mu x^a$ where $\mu$ ranges over $\k$. Since $a>n$ it follows that $c(x)=x^n$ divides $\mu x^a$, and hence $\auto_{\k}(\a_{c,q},\delta)\cap \saut_{\k}(\a_{c,q})$ consists of the automorphisms $\exp(\mu\bar{x}^{a-n}\xi_{c,q})$. On the other hand, for every $(e,u)\in \G_{c,q}$ the equation (\ref{nonHomogeneousEquation}) writes as $$ah(x)-xh^{\prime}(x)=(u-e^n)bx^n,$$ and a particular solution of this equation is $h_{e,u}(x)=\frac{u-e^n}{a-n}bx^n$. Therefore, the general solution of (\ref{nonHomogeneousEquation}) is $h_{e,u}(x)+\mu x^a$, where $\mu\in\k$. Since the multiplicity of $0$ as a root of $x^a$ is $a$ it follows that the unique solution of (\ref{nonHomogeneousEquation}) that satisfies $h^{(a)}(0)=0$ is $h_{e,u}(x)$. Thus, if we let $\sigma_{e,u}$ be the $\k$-automorphism of $\a_{c,q}$ given by $\sigma(\bar{x})=e\bar{x}$ and $\sigma(\bar{y})=u\bar{y}+\bar{x}^nh_{e,u}(\bar{x})$ then $\sigma_{e,u}$ commutes with $\delta$ and the map $$(e,u)\in\G_{c,q}\longmapsto \sigma_{e,u}\in \auto_{\k}(\a_{c,q},\delta)$$ is a morphism of algebraic groups. By Theorem \ref{isotropyGroupTheorem}, $\auto_{\k}(\a_{c,q},\delta)$ is the semi-direct product of a subgroup isomorphic to $\T_2$ and a subgroup isomorphic to $\G_a$. This proves in particular that $\auto_{\k}(\a_{c,q},\delta)$ is three-dimensional. It is also worth mentioning that if $b\neq 0$ then $h_{e,u}(x)$ is in general nonzero, and hence the group action underlying the semi-direct product structure of $\auto_{\k}(\a_{c,q},\delta)$ is not induced by the group action underlying the semi-direct product structure of $\auto_{\k}(\a_{c,q})$ given in Theorem \ref{splittingTheoremCentered}.
\end{example}

\bigskip \noindent {\bf Funding} The second author was supported by Rio Grande do Sul Research Foundation (FAPERGS, project number: 82451.812.41312.27032024), and IMPA Postdoctoral Program, Brazil.

\smallskip \noindent The fourth author was supported by the RSF grant 25-21-00277.

\vspace{5mm}
\small

\ttfamily Abdessamad Ahouita, Department of Mathematics, Faculty of Sciences Semlalia, Cadi Ayyad University, Morocco.

\textit{E-mail address:} a.ahouita.ced@uca.ac.ma 
\vspace{3mm}

\ttfamily Rene Baltazar, Institute of Mathematics, Statistics and Physics, Federal University of Rio Grande, Brazil.

\textit{E-mail address:} renebaltazar.furg@gmail.com

\vspace{3mm}
\ttfamily M'hammed El Kahoui, Department of Mathematics, Faculty of Sciences Semlalia, Cadi Ayyad University, Morocco.

\textit{E-mail address:} elkahoui@uca.ac.ma
\vspace{3mm}

\ttfamily Sergey Gaifullin, Faculty of Mechanics and Mathematics, Department of Higher Algebra, Lomonosov Moscow State University, Russia and Faculty of Computer Science, HSE University, Russia.

\textit{E-mail address:} sgayf@yandex.ru


\begin{thebibliography}{plain}

\bibitem{baltazarPan_2021}
Baltazar, R. and Pan, I. On the automorphism group of a polynomial differential ring in two variables. \textit{Journal of Algebra}, 576:197--227, 2021.

\bibitem{baltazarVeloso2020}
Baltazar, R. and Veloso, M. On isotropy group of Danielewski surfaces. \textit{Communications in Algebra}, 49(3):1006--1016, 2021.

\bibitem{benkhaddah_elkahoui_ouali_2023}
Ben Khaddah, A., El Kahoui, M., and Ouali, M. The freeness property for locally nilpotent derivations of $R^{[2]}$. \textit{Transformation Groups}, 28(1):35--60, 2023.

\bibitem{bianchi_vesolo_2017}
Bianchi, A. C. and Veloso, M. O. Locally nilpotent derivations and automorphism groups of certain Danielewski surfaces. \textit{Journal of Algebra}, 469:96--108, 2017.

\bibitem{danielewski_89}
Danielewski, W. On a cancellation problem and automorphism groups of affine algebraic varieties. Preprint, Warsaw, 1989.

\bibitem{dasguptaLahiri_2023}
Dasgupta, N. and Lahiri, A. Isotropy subgroups of some almost rigid domains. \textit{Journal of Pure and Applied Algebra}, 227(4):107250, 2023.

\bibitem{elkahoui25}
El Kahoui, M., Essamaoui, N., and Ouali, M. The centralizer of a locally nilpotent $R$-derivation of the polynomial $R$-algebra in two variables. \textit{Journal of Pure and Applied Algebra}, 229(1):107828, 2025.

\bibitem{elkahouiHammi_2025}
El Kahoui, M. and Hammi, A. Automorphisms of simple Danielewski algebras. \textit{Journal of Algebra and its Applications}, to appear, 2024.

\bibitem{essen_book}
van den Essen, A. \textit{Polynomial automorphisms and the Jacobian conjecture}. Progress in Mathematics, Vol. 190, Birkhäuser Verlag, Basel, 2000.

\bibitem{finstonWalcher97}
Finston, D. R. and Walcher, S. Centralizers of locally nilpotent derivations. \textit{Journal of Pure and Applied Algebra}, 120(1):39--49, 1997.

\bibitem{freudenburg_book}
Freudenburg, G. \textit{Algebraic theory of locally nilpotent derivations}. Encyclopaedia of Mathematical Sciences, Vol. 136, 2nd ed., Springer-Verlag, Berlin, 2017.

\bibitem{freudenburg_2019}
Freudenburg, G. \emph{Canonical factorization of the quotient morphism for an affine $\mathbb{G}_a$-variety}, \textit{Transformation Groups}, vol.~24, no.~2, pp.~355--377, 2019.

\bibitem{furterKraft_2018}
Furter, J.-P. and Kraft, H. On the geometry of the automorphism groups of affine varieties. Preprint, arXiv:1809.04175 [math.AG], 2018.

\bibitem{makar-limanov_2001}
Makar-Limanov, L. On the group of automorphisms of a surface $x^n y = P(z)$. \textit{Israel Journal of Mathematics}, 121:113--123, 2001.

\bibitem{miyanishi_book}
Miyanishi, M. \textit{Curves on rational and unirational surfaces}. Tata Institute of Fundamental Research Lectures on Mathematics and Physics, Vol. 60, Bombay, 1978.

\bibitem{pan_2022}
Pan, I. A characterization of local nilpotence for dimension two polynomial derivations. \textit{Communications in Algebra}, 50(5):1884--1888, 2022.

\bibitem{poloni_2011}
Poloni, P.-M. Classification(s) of Danielewski hypersurfaces. \textit{Transformation Groups}, 16(2):579--597, 2011.

\bibitem{Rentschler_68}
Rentschler, R. Opérations du groupe additif sur le plan affine. \textit{C. R. Acad. Sci. Paris Sér. A–B}, 267:A384--A387, 1968.

\bibitem{shafarevich_1966}
Shafarevich, I. R. On some infinite-dimensional groups. \textit{Rendiconti di Matematica e delle sue Applicazioni}, 25(1–2):208--212, 1966.

\bibitem{stampfli_2014}
Stampfli, I. Automorphisms of $\mathbb{C}^3$ commuting with a $\mathbb{C}^+$-action. \textit{International Mathematics Research Notices (IMRN)}, 2015(19):9832--9856.

\bibitem{zaidenberg_2017}
Kovalenko, S., Perepechko, A., and Zaidenberg, M. On automorphism groups of affine surfaces. In \textit{Algebraic varieties and automorphism groups}, Adv. Stud. Pure Math. 75, Math. Soc. Japan, Tokyo, 2017.

\end{thebibliography}
\end{document}